\title{\vspace{-0.5cm}Prime Fano threefolds of genus~12 with a $\mathbb{G}_\mathrm{m}$-action \\ and their automorphisms}
\author{\vspace{0cm} Alexander Kuznetsov and Yuri Prokhorov}
\institution{Steklov Mathematical Institute of Russian Academy of Sciences, Moscow, Russia \newline
Interdisciplinary Scientific Center J.-V. Poncelet (CNRS UMI 2615), Moscow, Russia
\newline
Laboratory of Algebraic Geometry, National Research University Higher School of Economics, Moscow, Russia}\\
\email{akuznet@mi.ras.ru}}\\
\institution{Steklov Mathematical Institute of Russian Academy of Sciences, Moscow, Russia
\newline
Laboratory of Algebraic Geometry, National Research University Higher School of Economics, Moscow
\newline
Department of Algebra, Moscow State University, Moscow}\\
\email{prokhoro@mi.ras.ru}}
\date{\vspace{-5ex}} 
\journal{\'Epijournal de G\'eom\'etrie Alg\'ebrique} 
\numberwithin{equation}{numsection}
\newtheorem{theorem}[equation]{Theorem}
\newtheorem{proposition}[equation]{Proposition}
\newtheorem{lemma}[equation]{Lemma}
\newtheorem{corollary}[equation]{Corollary}
\newtheorem{remark}[equation]{Remark}
\DeclareMathOperator{\GL}{\mathrm{GL}}
\DeclareMathOperator{\PGL}{\mathrm{PGL}}
\newcommand{\Aut}{\operatorname{Aut}}
\newcommand{\Pic}{\operatorname{Pic}}
\newcommand{\PP}{\mathbb{P}}
\newcommand{\ZZ}{\mathbb{Z}}
\newcommand{\CC}{\mathbb{C}}
\newcommand{\GG}{\mathbb{G}}
\newcommand{\cQ}{{\mathscr{Q}}}
\newcommand{\cX}{{\mathscr{X}}}
\newcommand{\cC}{{\mathscr{C}}}
\newcommand{\NNN}{{\mathscr{N}}}
\newcommand{\OOO}{{\mathscr{O}}}
\newcommand{\cO}{\mathscr{O}}
\newcommand{\xref}[1]{\textup{\ref{#1}}}
\newcommand{\nr}{{\mathrm{nr}}}
\newcommand{\MU}{{\mathrm{MU}}}
\newcommand{\m}{{\mathrm{m}}}
\newcommand{\g}{{\mathrm{g}}}
\begin{document}


\maketitle



\begin{prelims}


\def\abstractname{Abstract}
\abstract{We give an explicit construction of prime Fano threefolds of genus~12 with a $\mathbb{G}_\mathrm{m}$-action, 
describe their isomorphism classes and automorphism groups.}

\keywords{Fano threefolds; automorphism group; Sarkisov link;
   Mukai--Umemura threefold; $\mathbb{G}_\mathrm{m}$-action}

\MSCclass{14J45; 14J50}

\vspace{0.05cm}

\languagesection{Fran\c{c}ais}{%

\textbf{Titre. Solides de Fano primitifs de genre 12 avec action de $\mathbb{G}_\mathrm{m}$ et leurs automorphismes} \commentskip \textbf{R\'esum\'e.} 
Nous donnons une construction explicite des
solides de Fano primitifs (ceux dont le fibr\'e canonique engendre le groupe de Picard) de genre~12, \'equip\'es d'une action de $\mathbb{G}_{\mathrm{m}}$, et nous d\'ecrivons leurs classes d'isomor\-phisme et leurs groupes d'automorphismes.}

\end{prelims}


\newpage

\setcounter{tocdepth}{1}
\tableofcontents

\section{Introduction} 

We work over the field $\CC$ of complex numbers.
Recall that a \emph{prime Fano threefold} is a smooth projective variety of dimension 3 with~$\Pic(X) \cong \mathbb{Z} K_X$ and $-K_X$ ample.
The genus $\g(X)$ is defined by the equality $(-K_X)^3 = 2\g(X)- 2$.
We refer to~\cite{Iskovskikh-Prokhorov-1999} for the general theory of Fano varieties and their classification in dimension~3.

In this paper we discuss prime Fano threefolds of genus 12, which means $(-K_X)^3 = 22$.
Specifically, we give an explicit description of those of them that admit a faithful action of a one-dimensional torus $\GG_\m$.
Thus, this paper complements to~\cite{Prokhorov-1990c}, \cite[\hbox{\S\S5.3--5.4}]{Kuznetsov-Prokhorov-Shramov}, 
\cite{Dinew-Kapustka-Kapustka-2015}, and~\cite{D08,D17,CS18}.
To state the main result, we introduce some notation.

Consider the projective four-space $\PP^4$ with a $\GG_\m$-action with weights $(0,1,3,5,6)$ and let~$y_0,y_1,y_3,y_5,y_6$ be the corresponding coordinates.
Consider the map $\PP^1 \to \PP^4$ 
\begin{equation}
\label{eq:gamma}
(t_0 : t_1) \longmapsto (y_0 : y_1 : y_3 : y_5 : y_6) = (t_0^6 : t_0^5t_1 : t_0^3t_1^3 : t_0t_1^5 : t_1^6),
\end{equation} 
and let $\Gamma \subset \PP^4$ be its image.
This is a smooth rational sextic curve.

Consider the pencil of quadrics in $\PP^4$ generated by 
\begin{equation}
\label{eq:q0-q8}
Q_0 = \{ y_0y_6- y_3^2 = 0 \}
\qquad\text{and}\qquad 
Q_\infty = \{ y_3^2- y_1y_5 = 0 \}.
\end{equation}
For $u = (u_0:u_1) \in \PP^1$ denote by $Q_u = u_0Q_0 + u_1Q_\infty$ the corresponding quadric in the pencil.
All quadrics~$Q_u$ pass through $\Gamma$ and $Q_u$ is smooth if and only if $u \in \PP^1 \setminus \{0,1,\infty\}$.

\begin{theorem}
\label{main}
There is a natural bijection between 
isomorphism classes of prime Fano threefolds of genus~$12$ with a faithful\/ $\GG_\m$-action, and 
the set $\PP^1 \setminus \{0,1,\infty\}$ of smooth quadrics $Q_u$ passing through the curve $\Gamma$.

Set $u = u_0/u_1$.
If $X^\m(u)$ denotes the threefold corresponding to the quadric~$Q_u$, then
\begin{equation*}
\Aut(X^\m(u)) \cong 
\begin{cases}
\PGL_2, & \text{if $u = -1/4$}, \\
\GG_\m \rtimes \ZZ/2\ZZ, & \text{if $u \in \PP^1 \setminus \{0,1,-1/4,\infty\}$},
\end{cases}
\end{equation*}
where the action of $\ZZ/2\ZZ$ on $\GG_\m$ in the semidirect product is given by the inversion.

In particular, $X^\m(-1/4) \cong X^\MU$ is the Mukai--Umemura threefold.
\end{theorem}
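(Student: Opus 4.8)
\emph{Strategy and the bijection.} The plan is to reduce both assertions to an explicit, $\GG_\m$-equivariant Sarkisov link between prime Fano threefolds of genus $12$ and the pairs $(Q_u,\Gamma)$, and then to read off the automorphism groups from it. For the forward map I would take $u\in\PP^1\setminus\{0,1,\infty\}$, so that $Q_u$ is a smooth quadric threefold through $\Gamma$, and feed $(Q_u,\Gamma)$ into the link to produce a smooth prime Fano threefold $X^\m(u)$ with $(-K)^3=22$. The construction is $\GG_\m$-equivariant: the monomial curve $\Gamma$ of \eqref{eq:gamma} is invariant under the weight-$(0,1,3,5,6)$ action, while each $Q_u$ of \eqref{eq:q0-q8} is semi-invariant of weight $6$ and hence cuts out a $\GG_\m$-invariant quadric, so the torus descends to a faithful action on $X^\m(u)$. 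For surjectivity and injectivity I would run the link backwards: a prime Fano threefold of genus $12$ with faithful $\GG_\m$-action yields a $\GG_\m$-invariant pair consisting of a smooth quadric threefold and a rational sextic on it; after normalizing the weights to $(0,1,3,5,6)$ the sextic is forced to be $\Gamma$ and the quadric to lie in the pencil $\langle Q_0,Q_\infty\rangle$, recovering $u$ and showing the two maps are mutually inverse.

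\emph{The lower bound for $\Aut$.} I would exhibit $\GG_\m\rtimes\ZZ/2\ZZ$ inside $\Aut(X^\m(u))$ for every admissible $u$. The torus is built into the construction. For the involution, consider $\iota\colon y_i\mapsto y_{6-i}$, induced by the swap $(t_0:t_1)\mapsto(t_1:t_0)$ in \eqref{eq:gamma}: it preserves $\Gamma$, fixes each generator $Q_0$ and $Q_\infty$ (hence every member of the pencil), and conjugates the weight-$(0,1,3,5,6)$ torus into its inverse. By functoriality of the link, $\iota$ induces an automorphism of $X^\m(u)$ normalizing $\GG_\m$ by inversion, giving the asserted semidirect product.

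\emph{The exact group and the special value.} Since $X^\m(u)$ is a prime Fano threefold, $\Aut(X^\m(u))$ is a linear algebraic group having $\GG_\m$ as a maximal torus, and I would invoke the classification of automorphism groups of genus-$12$ prime Fano threefolds to conclude that the reductive identity component containing this torus is either $\GG_\m$ or $\PGL_2$. For generic $u$ I would verify that the configuration carries no further infinitesimal symmetry, so $\Aut^\circ=\GG_\m$, and that the centralizer of $\GG_\m$ reduces to $\GG_\m$, whence $\Aut(X^\m(u))=\GG_\m\rtimes\ZZ/2\ZZ$. The jump to $\PGL_2$ is a closed condition singling out finitely many $u$; it occurs exactly when $X^\m(u)$ is $\mathrm{SL}_2$-invariant, i.e. the Mukai--Umemura threefold $X^\MU$, and an explicit computation locates this value at $u=-1/4$. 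There $\GG_\m\rtimes\ZZ/2\ZZ$ sits inside $\PGL_2$ as the normalizer of a maximal torus, matching the stated formula.

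\emph{The main obstacle.} The hard part is the upper bound at $u=-1/4$, because the extra $\PGL_2$ is \emph{not} visible as projective transformations of the ambient $\PP^4$: the span $\langle y_0,y_1,y_3,y_5,y_6\rangle$ is not an $\mathrm{SL}_2$-submodule of $\mathrm{Sym}^6(\CC^2)$, the weights $(0,1,3,5,6)$ being those of a reducible representation. The additional automorphisms must therefore be detected intrinsically on $X^\m(u)$ rather than through the link, and pinning down the unique value $u=-1/4$ where they appear, while simultaneously ruling out any extra automorphisms for the remaining values, is the most delicate point of the argument.
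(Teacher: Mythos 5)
Your overall architecture --- run the Sarkisov link equivariantly in both directions, exhibit the torus and the involution $y_i\mapsto y_{6-i}$ inside $\Aut(Q_u,\Gamma)$, and cite the classification to see that $\Aut^\circ$ is $\GG_\m$ or $\PGL_2$ --- matches the paper's, and your explicit checks (weight-$6$ semi-invariance of the pencil, the involution preserving each $Q_u$ and inverting the torus) are correct. But there are two genuine gaps. First, the backward direction of the link requires the choice of a \emph{smooth} conic $C\subset X$, and for the output pair $(Q,\Gamma)$ to be $\GG_\m$-invariant --- let alone canonically attached to $X$ --- this conic must be $\Aut(X)$-invariant and essentially unique. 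You assert that $X$ ``yields a $\GG_\m$-invariant pair'' without producing such a conic. This is not automatic: the paper devotes Proposition~\ref{proposition:conic} and Lemma~\ref{lemma-C-unique} to it, analyzing the $\GG_\m$-fixed locus in the Hilbert scheme of conics $S(X)\cong\PP^2$, excluding the two non-reduced conics supported on the special lines, and excluding reducible invariant conics using the fact that the two $\GG_\m$-invariant lines are disjoint. Without existence and uniqueness of this conic, neither the well-definedness and injectivity of your bijection nor your automorphism computation goes through.

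Second, your treatment of the upper bound on $\Aut(X^\m(u))$ is both incomplete and harder than necessary. You propose to show that the centralizer of $\GG_\m$ reduces to $\GG_\m$ and that there is ``no further infinitesimal symmetry'' for generic $u$, but you do not carry this out, and you explicitly defer the identification $u=-1/4$ and the exclusion of extra automorphisms as ``the most delicate point''. The paper's resolution is that once the unique $\Aut(X)$-invariant smooth conic $C$ is in hand, the \emph{entire} group satisfies $\Aut(X)=\Aut(X,C)\cong\Aut(Q,\Gamma)$ by functoriality of the link (Remark~\ref{remark:functoriality}), and $\Aut(Q,\Gamma)$ is computed to be exactly $\GG_\m\rtimes\ZZ/2\ZZ$ directly in $\PP^4$ (Lemma~\ref{lemma:aut-q-gamma}); no intrinsic analysis of hidden symmetries of $X$ is needed, since $\Aut(X^\MU)\cong\PGL_2$ is quoted from Mukai--Umemura and the fact that $X^\MU$ is the only genus-$12$ threefold with larger connected automorphism group is quoted from the literature. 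Finally, locating $X^\MU$ at $u=-1/4$ is a substantive explicit computation (Proposition~\ref{proposition:q-mu}: writing down the double projection from the orbit-closure model of $X^\MU$, computing the image of the boundary divisor, and finding the unique quadric through it); your observation that the extra $\PGL_2$ is invisible on $\PP^4$ because $(0,1,3,5,6)$ is not the weight set of an $\mathrm{SL}_2$-submodule is correct and is precisely why this step cannot be skipped, but the computation itself is absent from your proposal.
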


The proof of the theorem takes the rest of the paper.
We compute the automorphisms in Corollary~\ref{corollary:automorphisms}, 
establish the bijection of isomorphism classes in Corollary~\ref{corollary:bijection},
and identify the Mukai--Umemura threefold in Proposition~\ref{proposition:q-mu}.

\section{A birational transformation}
\label{section:transformation}

The proof of Theorem~\ref{main} is based on a \emph{Sarkisov link} described in Theorem~\ref{theorem-diagram} below.
To state it we need to remind the notion of quadratical normality for projective curves.

Recall that a curve in a projective space is called \emph{quadratically normal} 
if restrictions of quadrics form a complete linear system on the curve.
We will need the following evident observation.

\begin{remark}
\label{remark:gamma-qn}{\rm
A rational sextic curve $\Gamma \subset \PP^4$ is quadratically normal if and 
only if the linear system of quadrics passing through $\Gamma$ is one-dimensional. 
Indeed, this follows immediately from the exact sequence
\begin{equation*}
0 \to H^0(\PP^4,I_\Gamma(2)) \to H^0(\PP^4,\cO_{\PP^4}(2)) \to H^0(\Gamma,\cO_\Gamma(12))
\end{equation*}
since $\dim H^0(\PP^4,\cO_{\PP^4}(2)) = 15$ and $\dim H^0(\Gamma,\cO_\Gamma(12)) = 13$.
In particular, a quadratically normal rational sextic curve in $\PP^4$ is not contained in a hyperplane.}
\end{remark}

The next result is the base of our construction.

\begin{theorem}[\cite{Takeuchi-1989}, \cite{Iskovskikh-Prokhorov-1999}]
\label{theorem-diagram} 
Let $X$ be a prime Fano threefold of genus~$12$ and let $C$ be a smooth conic on~$X$. 
Then there exists the following commutative diagram 
of birational maps
\begin{equation}
\label{diagram}
\vcenter{\xymatrix{
& X' \ar[dl]_{\sigma_X} \ar@{-->}[rr]^{\chi}
&& Q'\ar[dr]^{\sigma_Q}
\\
X \ar@{-->}^{\xi}[rrrr]
&&&& Q
}}
\end{equation}
where 
\begin{itemize}
\item 
$Q$ is a smooth quadric in $\PP^4$, 
\item 
$\sigma_Q$ is the blow up of a smooth rational quadratically normal sextic curve $\Gamma\subset Q$,
\item 
$\sigma_X$ is the blow up of $C$, 
\end{itemize}
and $\chi$ is a flop.

Furthermore, let $H_X$ and $H_Q$ be the ample generators of the Picard groups $\Pic(X)$ and~$\Pic(Q)$, respectively, 
and denote $H'_X := \sigma_X^*H_X$ and $H'_Q := \sigma_Q^*H_Q$.
Let $E_C:=\sigma_X^{-1}(C)$ and $E_\Gamma:=\sigma_Q^{-1}(\Gamma)$ be the exceptional divisors of~$\sigma_X$ and~$\sigma_Q$, respectively. 
Then
\begin{enumerate}
\item[\rm (i)]
\label{theorem-diagram-1}
The map $\sigma_Q \circ \chi \colon X' \dashrightarrow Q\subset \PP^4$ is given by the linear system $|H'_{X}- 2E_C|$ and contracts the unique divisor from $|2H'_X- 5E_C|$. 
\item[\rm (ii)]
\label{theorem-diagram-2}
The map $\sigma_X \circ \chi^{-1} \colon Q' \dashrightarrow X\subset \PP^{13}$ is given by the linear system $|5H'_Q- 2E_\Gamma|$ and contracts the unique divisor from $|2H'_Q- E_\Gamma|$.
\end{enumerate}
\end{theorem}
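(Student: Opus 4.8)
The plan is to realise the diagram~\eqref{diagram} as a two-ray game (a Sarkisov link of type~II) initiated by the blow-up $\sigma_X\colon X'\to X$ of the conic $C$, and to identify its output by intersection theory. First I would record the numerical data. Since $-K_X=H_X$ with $H_X^3=22$, and a smooth conic satisfies $K_X\cdot C=-2$ and hence $\deg N_{C/X}=2g(C)-2-K_X\cdot C=0$ by adjunction, the blow-up formulas give
\[
-K_{X'}=H'_X-E_C,\qquad (H'_X)^3=22,\quad (H'_X)^2E_C=0,\quad H'_X\cdot E_C^2=-2,\quad E_C^3=0,
\]
whence $(-K_{X'})^3=(H'_X-E_C)^3=16$. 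As $\rho(X')=2$, the cone $\overline{NE}(X')$ has exactly two extremal rays, one contracted by $\sigma_X$ (the ruling of $E_C$), and the whole problem is to understand the other.

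The crux is to show that $-K_{X'}=H'_X-E_C$ is nef and big but not ample, i.e. that $X'$ is a weak Fano threefold. For this I would check $(H'_X-E_C)\cdot\gamma\ge0$ for every curve $\gamma$: on a fibre $f$ of $E_C\cong\PP(N_{C/X})$ one has $(H'_X-E_C)\cdot f=1$, and for the strict transform $\tilde B$ of a curve $B\subset X$ meeting $C$ in $k$ points one has $(H'_X-E_C)\cdot\tilde B=H_X\cdot B-k$, so the delicate point is to rule out $-K_{X'}$-negative curves — concretely, to show no line on $X$ is bisecant to $C$ and to control sections of $E_C$ using $\deg N_{C/X}=0$. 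Granting nef-ness, the anticanonical morphism $X'\to\bar X$ contracts precisely the finitely many $-K_{X'}$-trivial curves (the strict transforms of lines meeting $C$), so it is a small, hence flopping, contraction; I flop it to obtain $\chi\colon X'\dashrightarrow Q'$. Being an isomorphism in codimension one, $\chi$ gives $\Pic(Q')\cong\Pic(X')$, $\rho(Q')=2$ and $(-K_{Q'})^3=16$, with $-K_{Q'}$ corresponding to $-K_{X'}$.

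Next, to identify the right-hand side of~\eqref{diagram}, I would recognise the composite $\sigma_Q\circ\chi\colon X'\dashrightarrow Q$ as the double projection of $X$ from $C$ — the rational map defined by the anticanonical divisors vanishing to order two along $C$, i.e. by $|H'_X-2E_C|$. A direct computation, showing $\dim|H'_X-2E_C|=4$ and that the image obeys a single quadratic relation while the contracted divisor maps onto a curve, identifies $Q$ as a quadric threefold in $\PP^4$ and exhibits $\sigma_Q\colon Q'\to Q$ as a birational morphism contracting an irreducible divisor $E_\Gamma$ onto a curve $\Gamma$. By Mori's classification of $K$-negative extremal divisorial contractions of terminal threefolds that send a divisor to a curve, $Q$ is then smooth and $\sigma_Q$ is the blow-up of the smooth curve $\Gamma$, so that $-K_Q=3H_Q$ and $-K_{Q'}=3H'_Q-E_\Gamma$. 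Intersection-theoretic bookkeeping on $Q'$ then gives $H_Q\cdot\Gamma=6$ and $g(\Gamma)=0$, consistently with the flop-invariant value $(-K_{Q'})^3=16$, $(-K_Q)^3=54$ and the blow-up formula: $\Gamma$ is a smooth rational sextic. I expect the main obstacle of the whole argument to be this step together with the nef-ness of the previous one — establishing that $-K_{X'}$ has no negative curves and that the double projection lands birationally on a \emph{smooth} quadric.

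It remains to read off the linear systems in (i) and (ii). Transporting $\Pic(X')\cong\Pic(Q')$ through $\chi$, I locate $H'_Q,E_\Gamma$ inside $\ZZ H'_X\oplus\ZZ E_C$ from two $\Pic$-level relations: crepancy of the flop, $H'_X-E_C=3H'_Q-E_\Gamma$, and the double-projection identification $H'_Q=H'_X-2E_C$ of the previous paragraph. Solving gives $E_C=2H'_Q-E_\Gamma$ and $H'_X=5H'_Q-2E_\Gamma$, which is exactly the linear system of (ii); moreover $2H'_X-5E_C=E_\Gamma$ and $E_C=2H'_Q-E_\Gamma$ show that $\sigma_Q\circ\chi$ contracts the divisor $|2H'_X-5E_C|$ and $\sigma_X\circ\chi^{-1}$ contracts $|2H'_Q-E_\Gamma|$, each of which is the unique effective member of its class ($h^0=1$). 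Finally, $h^0(2H'_Q-E_\Gamma)=1$ says there is a unique quadric section of $Q$ through $\Gamma$, so the quadrics of $\PP^4$ through $\Gamma$ form a pencil; by Remark~\ref{remark:gamma-qn} this means $\Gamma$ is quadratically normal, which completes the proof.
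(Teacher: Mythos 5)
Your overall architecture is the standard two-ray game that the paper itself delegates to \cite{Takeuchi-1989} and \cite[Theorem~4.4.11]{Iskovskikh-Prokhorov-1999}, and your endgame is exactly the paper's: the lattice relations $E_\Gamma = 2H'_X-5E_C$, $E_C = 2H'_Q-E_\Gamma$, $H'_X = 5H'_Q-2E_\Gamma$ are correct, and deducing quadratic normality of $\Gamma$ from $h^0(2H'_Q-E_\Gamma)=h^0(E_C)=1$ together with Remark~\xref{remark:gamma-qn} is precisely how the paper concludes. The numerical bookkeeping ($\deg\NNN_{C/X}=0$, $(-K_{X'})^3=16=(-K_{Q'})^3$, the consistency check against $(-K_Q)^3=54$ and a rational sextic) is also right.

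The genuine gap is the step you yourself flag and then skip: you never prove that $-K_{X'}=H'_X-E_C$ is nef, nor that the anticanonical contraction is \emph{small}, i.e.\ contracts no divisor. Both are needed before any flop exists, and both require an actual argument, not just the observation that the delicate curves are bisecant lines. Moreover, the route you propose for nefness (classifying $-K_{X'}$-negative curves, ruling out bisecants of $C$, controlling sections of $E_C$) is harder than what is actually needed and you give no mechanism for carrying it out. The paper's Lemma~\xref{lemma:kxprime} does this in one stroke: since $X\subset\PP^{13}$ is an intersection of quadrics and contains no planes, the scheme-theoretic intersection of $X$ with the linear span of $C$ equals $C$, so $|H'_X-E_C|$ is base point free on $X'$ --- which gives nefness without any case analysis of curves. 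For smallness, the paper shows via the exact sequences for $I_C$ and $I_C^2$ that $\dim|H'_X-2E_C|\ge 4$; combined with irreducibility of hyperplane sections (as $\Pic(X)=\ZZ H_X$) this forces $E_C$ to be the only candidate for a contracted divisor, which is then excluded by $(-K_{X'})^2\cdot E_C=4\ne 0$. Your assertion that the anticanonical morphism ``contracts precisely the finitely many $-K_{X'}$-trivial curves'' presupposes exactly this conclusion. A second, smaller gap of the same kind: the identification of the output of the link --- that the second contraction on $Q'$ is divisorial of type E1 onto a smooth quadric with $H'_Q=H'_X-2E_C$, rather than some other Mori fiber structure or contraction type --- is asserted via ``a direct computation'' but is really the Diophantine analysis of Takeuchi/Iskovskikh--Prokhorov; citing that classification (as the paper does) or carrying it out is required to make part (i), and hence your derivation of part (ii), complete.
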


\begin{proof}
Let $X'$ be the blowup of $X$ along $C$.
The linear system $|-K_{X'}|$ is base point free and defines a morphism that does not contract divisors by Lemma~\ref{lemma:kxprime} below.
Therefore, \cite{Takeuchi-1989} or~\cite[Theorem~4.4.11]{Iskovskikh-Prokhorov-1999} applies 
and gives the diagram, and most of the details of the theorem.
The only thing left is to show that the curve $\Gamma$ is quadratically normal.

The argument of Remark~\ref{remark:gamma-qn} shows that
the curve $\Gamma$ is quadratically normal if and only if the linear system $|2H'_Q- E_\Gamma|$ is zero-dimensional.
On the other hand, the map $\chi^{-1}$ identifies this linear system with~$|2(H'_X- 2E_C)- (2H'_X- 5E_C)| = |E_C|$,
which consists of a single divisor, hence the claim.
\hfill $\Box$
\end{proof}

\begin{lemma}
\label{lemma:kxprime}
Let $X'$ be the blowup of $X$ along a smooth conic $C$.
The linear system $|-K_{X'}|$ is base point free and defines a morphism that does not contract divisors.
\end{lemma}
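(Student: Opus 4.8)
The plan is to establish two things about the anticanonical system $|-K_{X'}|$ on the blowup $X' = \mathrm{Bl}_C X$: that it is base point free, and that the morphism it defines contracts no divisors. First I would compute the anticanonical class. Since $C$ is a smooth conic on the prime Fano threefold $X$ of genus $12$, the blowup formula gives $-K_{X'} = \sigma_X^*(-K_X) - E_C = H_X' - E_C$ (using that $-K_X = H_X$ here, with $(-K_X)^3 = 22$). I would record the intersection numbers: $(-K_{X'})^3 = (H_X')^3 - 3(H_X')^2 E_C + 3 H_X' E_C^2 - E_C^3$. Using $(H_X')^3 = 22$, the fact that $E_C$ is a projective bundle over the conic $C \cong \PP^1$ with $H_X'\cdot(\text{fiber of }E_C)=0$, and the standard relations $H_X'\cdot E_C^2 = -\deg(N_{C/X})$-type computations, I expect to get $(-K_{X'})^3 = 22 - 2\cdot 2 - 2 = $ a positive number, confirming $-K_{X'}$ is nef and big if base point freeness holds. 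The key numerical input is the degree of the normal bundle $N_{C/X}$, which for a conic satisfies $\deg N_{C/X} = -K_X\cdot C - 2 = 2-2 = 0$, so $N_{C/X} \cong \OOO\oplus\OOO$ or $\OOO(1)\oplus\OOO(-1)$.

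Next I would prove base point freeness. The natural approach is to show $H^0(X, I_C(-K_X)) \otimes \OOO_X \to I_C(-K_X)$ is surjective away from issues on $C$, i.e. that quadrics (anticanonical sections) vanishing on $C$ cut out $C$ scheme-theoretically and separate the normal directions. Since $X \subset \PP^{13}$ is anticanonically embedded and conics on prime Fano threefolds of genus $12$ are well understood, I would invoke projective normality of $X$ and the fact that the ideal of a conic $C$ is generated in low degree. Concretely, base point freeness of $|H_X' - E_C|$ is equivalent to: the anticanonical linear subsystem $|{-K_X} - C|$ of hypersurfaces through $C$ has no base points outside $C$ and, after blowing up, the proper transforms have no base points on $E_C$. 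The latter amounts to checking that the sections vanish to order exactly one along $C$ generically, controlled by $N_{C/X}$.

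The main obstacle will be ruling out that the anticanonical morphism $\varphi\colon X'\to \PP^N$ contracts a divisor. The potential contracted divisors are constrained: a divisor $D\subset X'$ is contracted iff $(-K_{X'})\cdot(\text{curves in } D)=0$, and since $-K_{X'}$ is the pullback of an ample class minus $E_C$, the candidates are the exceptional divisor $E_C$ itself and proper transforms of surfaces in $X$ swept out by lines or conics meeting $C$ appropriately. I would argue that $-K_{X'}$ restricted to $E_C$ remains relatively ample (or at least big on $E_C$), so $E_C$ is not contracted, and that the geometry of lines and conics on a \emph{prime} Fano threefold of genus $12$ (where $\Pic = \ZZ K_X$ forbids the special families that arise on threefolds of higher Picard rank) leaves no room for a sweeping surface of $(-K_{X'})$-degree zero. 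This is where I expect to lean on the detailed classification in \cite{Iskovskikh-Prokhorov-1999}, and the primality hypothesis is what makes the divisorial contraction impossible; verifying this exclusion carefully is the crux of the lemma.
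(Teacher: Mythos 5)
Your proposal correctly sets up the numerics ($-K_{X'} = H'_X - E_C$, $\deg\NNN_{C/X} = 0$), and your plan for base point freeness is in the right spirit, but it stops short of the actual mechanism. What makes the base-locus computation work is not projective normality or low-degree generation of $I_C$ in the abstract, but two specific classical facts: $X\subset\PP^{13}$ is an intersection of quadrics, and $X$ contains no planes (the latter because $\Pic(X)=\ZZ H_X$). From these, the scheme-theoretic intersection of $X$ with the plane $\langle C\rangle$ spanned by the conic is exactly $C$, so the hyperplane sections through $C$ cut out $C$ scheme-theoretically and their strict transforms have no common point on $X'$. You should make this step explicit rather than appealing vaguely to ``separating normal directions.''

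The genuine gap is in the second half. You yourself call the exclusion of contracted divisors ``the crux'' and then leave it to ``the geometry of lines and conics on a prime Fano threefold of genus 12''; no argument is actually given, and your reduction of the candidates to surfaces swept out by lines or conics meeting $C$ is itself unjustified --- a divisor contracted by $|H'_X - E_C|$ is a priori covered by curves $\gamma$ of any degree $d$ meeting $C$ in a length-$d$ scheme, not only lines and conics (that restriction is what one proves for the flopping \emph{curves}, not for hypothetical contracted divisors). The paper's route avoids all of this: if $(H'_X-E_C)\cdot\gamma=0$ then $H'_X\cdot\gamma>0$ forces $E_C\cdot\gamma>0$, hence $(H'_X-2E_C)\cdot\gamma<0$ and $\gamma$ lies in the base locus of $|H'_X-2E_C|$; a cohomological dimension count gives $\dim|H'_X-2E_C|\ge 4$; since every member of $|H_X|$ is irreducible (again $\Pic(X)=\ZZ H_X$), the only possible divisorial base component of this positive-dimensional system is $E_C$; and finally $E_C$ is not contracted because $(-K_{X'})^2\cdot E_C = 4$. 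Without some substitute for this chain --- or an actual verification of your classification-based exclusion --- the proof is incomplete.
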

\begin{proof} 
Recall that $H_X =-K_X$ is very ample,
\begin{equation}
\label{eq:14}
\dim H^0(X,\cO_X(-K_X)) = 14,
\end{equation} 
see~\cite[Corollary 2.1.14(ii)]{Iskovskikh-Prokhorov-1999}, and so $|-K_X|$ embeds $X$ into $\PP^{13}$.
Note that $X \subset \PP^{13}$ does not contain planes (because~$\Pic(X)=\ZZ H_X$)
and is an intersection of quadrics (see~\cite[Proposition IV.1.3 and Theorem II.3.4]{Iskovskikh-1980-Anticanonical}).

Note that $|-K_{X'}| = |H'_X- E_C|$ is the strict transform of the linear system of hyperplane sections of $X$ passing through the conic $C$.
Their scheme-theoretic intersection is the intersection of the linear span of $C$ with $X$.
Since $X$ is an intersection of quadrics and does not contain planes, this is equal to $C$.
Therefore, the linear system $|-K_{X'}|$ on~$X'$ is base point free
(moreover, it follows that the morphism defined by this linear system is birational, although we do not need this).
In particular, $-K_{X'} \cdot \gamma \ge 0$ for any curve $\gamma$ on~$X'$.

Assume $\gamma$ is contracted by $|-K_{X'}|$, so that \mbox{$-K_{X'} \cdot \gamma = 0$}.
Then $H'_X \cdot \gamma \ne 0$, since $\gamma$ is an effective curve, so since $H'_X$ is nef we have $H'_X \cdot \gamma > 0$.
Therefore $E_C \cdot \gamma > 0$ and so
\begin{equation*}
(H'_X- 2E_C) \cdot \gamma < (H'_X- E_C) \cdot \gamma = 0,
\end{equation*}
which means that $\gamma$ is contained in the base locus of the linear system $|H'_X- 2E_C|$.
The natural exact sequences
\begin{equation*}
0 \to H^0(X,I_C(H_X)) \to H^0(X,\cO_X(H_X)) \to H^0(C,\cO_C(2))
\end{equation*}
and 
\begin{equation*}
0 \to H^0(X,I^2_C(H_X)) \to H^0(X,I_C(H_X)) \to H^0(C,I_C/I^2_C(2))
\end{equation*}
and equalities
\begin{equation*}
\dim H^0(X,\cO_X(H_X)) = 14,
\qquad 
\dim H^0(C,\cO_C(2)) = 3,
\qquad 
\dim H^0(C,I_C/I^2_C(2)) = 6
\end{equation*}
(the first equality is~\eqref{eq:14}, the second is evident, 
and the third follows from~\eqref{eq:N}) 
imply that we have an inequality
\begin{equation*}
\dim|H'_X- 2E_C| \ge 13 - 3 - 6 = 4.
\end{equation*}
Since moreover $\Pic(X) = \ZZ H_X$, every hyperplane section of $X$ is irreducible, 
and since the linear system~$|H'_X- 2E_C|$ is of positive dimension,
the only possible divisorial component of its base locus is~$E_C$.
Thus, it remains to check that $E_C$ is not contracted by $|-K_{X'}|$.
This, however, easily follows from the equality~$(-K_{X'})^2 \cdot E_C = (H'_X- E_C)^2 \cdot E_C = 4$.
\hfill $\Box$
\end{proof}

The construction of Theorem~\ref{theorem-diagram} can be reversed:

\begin{theorem}\label{theorem-diagram-inverse}
Let $Q\subset \PP^4$ be a smooth quadric and let $\Gamma\subset Q$ be a smooth rational quadratically normal sextic curve.
Then there is a smooth prime Fano threefold $X$ of genus $12$ and a smooth conic $C \subset X$ related to $(Q,\Gamma)$ by the diagram~\eqref{diagram}.
\end{theorem}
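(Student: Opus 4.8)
The plan is to reverse the link of Theorem~\ref{theorem-diagram} by running the minimal model program on the blow-up of $Q$ along $\Gamma$. Set $\sigma_Q \colon Q' \to Q$ to be this blow-up, with exceptional divisor $E_\Gamma$, and let $H'_Q = \sigma_Q^* H_Q$. Since $Q$ is a smooth quadric threefold and $\Gamma$ is a smooth rational sextic, $Q'$ is a smooth projective threefold with $\Pic(Q') = \ZZ H'_Q \oplus \ZZ E_\Gamma$ of rank $2$, and an adjunction computation gives $-K_{Q'} = 3H'_Q - E_\Gamma$ together with $(-K_{Q'})^3 = 16$. The cone $\overline{NE}(Q')$ has two extremal rays; one of them is generated by a fiber of $\sigma_Q$ and is $(-K_{Q'})$-positive, so the whole construction amounts to identifying the second ray and following the two-ray game through to its other end.

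First I would locate the $(-K_{Q'})$-trivial curves. If $\ell \subset Q$ is a trisecant line to $\Gamma$, its strict transform $\tilde\ell$ satisfies $H'_Q \cdot \tilde\ell = 1$ and $E_\Gamma \cdot \tilde\ell = 3$, hence $-K_{Q'} \cdot \tilde\ell = 0$; these trisecants are the natural candidates for the flopping curves. The key technical point is to show that $-K_{Q'}$ is nef: for an irreducible curve $\gamma \not\subset E_\Gamma$ of degree $d = H'_Q\cdot\gamma$ meeting $\Gamma$ with total multiplicity $m = E_\Gamma\cdot\gamma$, nefness is the inequality $m \le 3d$, i.e.\ the statement that $Q$ contains no $4$-secant line to $\Gamma$ (and no curve of degree $d$ that is more than $3d$-secant). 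This is exactly where the geometry of $\Gamma$ in $Q$ — its non-degeneracy and quadratic normality (Remark~\ref{remark:gamma-qn}) — must be exploited. Granting nefness, $|-nK_{Q'}|$ for $n \gg 0$ defines a small contraction $Q' \to \bar Y$ collapsing precisely the finitely many trisecant transforms $\tilde\ell$; flopping them produces a smooth threefold $X'$ together with $\chi^{-1}\colon Q' \dashrightarrow X'$, and $-K_{X'}$ is again nef with $(-K_{X'})^3 = 16$.

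It then remains to run the other half of the two-ray game on $X'$. Pushing the Picard group through the flop, the second extremal ray is spanned by the class of the strict transform $\tilde D$ of the special quadric section $D \subset Q$ — the restriction to $Q$ of the unique second member of the pencil of quadrics through $\Gamma$, which exists and is unique precisely because $\Gamma$ is quadratically normal, so that $|2H'_Q - E_\Gamma|$ consists of a single divisor. I would show that the associated contraction $\sigma_X \colon X' \to X$ is divisorial, contracting the flopped image $E_C$ of $\tilde D$ onto a smooth rational curve $C$, and that, from $-K_X \cdot C = 2$ and the intersection numbers, $C$ is a conic and $X$ is smooth with $(-K_X)^3 = (-K_{X'})^3 + 6 = 22$. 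Finally, since $\Pic(X')$ has rank $2$ and both extremal rays have now been accounted for, one concludes $\Pic(X) = \ZZ H_X$ with $-K_X = H_X$ ample, so that $X$ is a prime Fano threefold of genus $12$ carrying the conic $C$ and related to $(Q,\Gamma)$ by diagram~\eqref{diagram}. The main obstacle I anticipate is twofold: establishing nefness of $-K_{Q'}$ by ruling out over-secant lines and conics to $\Gamma$, and confirming that the terminal contraction $\sigma_X$ is genuinely divisorial onto a smooth conic with $X$ smooth of Picard number $1$ — rather than, say, a contraction to a point or onto a singular target.
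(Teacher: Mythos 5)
Your overall strategy is the paper's: blow up $\Gamma$, run the two-ray game, flop, and contract the other extremal ray. But the step you explicitly grant --- nefness of $-K_{Q'}=3H'_Q-E_\Gamma$ --- is the actual mathematical content of the theorem, and the plan you sketch for it (``ruling out over-secant lines and conics to $\Gamma$'') would not close the gap: a priori an irreducible curve of \emph{any} degree $d$ could meet $\Gamma$ with multiplicity exceeding $3d$, and there is no way to check this degree by degree. The paper's solution is Lemma~\xref{lemma-Z}: a quadratically normal smooth rational sextic in $\PP^4$ is a scheme-theoretic intersection of cubics. This follows from the Gruson--Lazarsfeld--Peskine theorem, which reduces the claim to the non-existence of a $4$-secant line, and a $4$-secant line is excluded precisely by quadratic normality (a dimension count: quadrics through a line have codimension $3$, while sections of $\OOO_\Gamma(12)$ vanishing on a length-$4$ subscheme have codimension $4$). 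Being cut out by cubics makes $|3H'_Q-E_\Gamma|$ base point free, which is stronger than nef and disposes of all degrees at once. Without this input your argument does not get off the ground.

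A secondary point: you assert that the anticanonical contraction of $Q'$ is small, collapsing only trisecant transforms, but you must rule out a contracted \emph{divisor}. The paper does this by observing that any $-K_{Q'}$-trivial curve has negative intersection with $2H'_Q-mE_\Gamma$ for all $m\ge 1$, hence lies in the unique member $F$ of $|2H'_Q-E_\Gamma|$ (unique and nonempty exactly by quadratic normality, Remark~\xref{remark:gamma-qn}); since $\Gamma$ spans $\PP^4$ this $F$ is irreducible, and $(-K_{Q'})^2\cdot F=4\ne 0$ shows $F$ is not contracted. Your final step --- that the divisorial contraction on the other side of the flop is the blowup of a smooth conic on a smooth prime Fano threefold of genus $12$ --- is in the paper likewise deferred to the numerical classification of such links (Takeuchi's Diophantine analysis, cf.\ Cutrone--Marshburn), so that part of your outline is at the same level of detail as the published proof.
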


\begin{proof}
Let $Q'$ be the blowup of $Q$ along $\Gamma$.
By Lemma~\ref{lemma-Z} below the linear system $|-K_{Q'}| = |3H'_Q- E_\Gamma|$ on~$Q'$ is base point free.
So, according to \cite[\S 4.1]{Iskovskikh-Prokhorov-1999} we have to show that 
the morphism defined by this linear system does not contract divisors.
The argument is analogous to that of Lemma~\ref{lemma:kxprime}.

Assume $\gamma$ is a curve on $Q'$ contracted by the linear system 
$|-K_{Q'}|$, so that~\mbox{$-K_{Q'} \cdot \gamma = 0$}.
Then $H'_Q \cdot \gamma \ne 0$, since $\gamma$ is an effective curve, so since $H'_Q$ is nef we have $H'_Q \cdot \gamma > 0$.
Therefore
$E_\Gamma \cdot \gamma > 0$ and so
\begin{equation*}
(2H'_Q- mE_\Gamma) \cdot \gamma < (3H'_Q-E_\Gamma)\cdot \gamma = 0,\qquad \text{for all $m \ge 1$}.
\end{equation*}
Take $m$ to be the maximal such that $|2H'_Q- m E_\Gamma| \neq \varnothing$ (by Remark~\ref{remark:gamma-qn} we have $m\ge 1$).
Let~$F \in |2H'_Q- m E_\Gamma|$ be any member.
Then $\gamma\subset F$.
Since $\Gamma$ does not lie in a hyperplane (Remark~\ref{remark:gamma-qn}), such $F$ is irreducible, and 
thus it remains to show that the morphism defined by the linear system $|-K_{Q'}|$ does not contract~$F$.
This follows from
\begin{equation*}
(-K_{Q'})^2 \cdot (2H'_Q- mE_\Gamma) =(3H'_Q-E_\Gamma)^2 \cdot (2H'_Q- mE_\Gamma) = 24-20m.
\end{equation*}

Now, we can make a flop $Q'\dashrightarrow X'$ and consider the Mori contraction $X'\to X$.
Solving Diophantine equations, as in \cite[\S 4.1]{Iskovskikh-Prokhorov-1999} (see also \cite{Cutrone-Marshburn})
one can show that $X'\to X$ is the blowup of a smooth conic on a prime Fano threefold $X$ of genus 12.
\hfill $\Box$
\end{proof}

\begin{lemma}
\label{lemma-Z}
Let $\Gamma\subset \PP^4$ be a smooth rational quadratically normal curve of degree $6$.
Then~$\Gamma$ is a scheme-theoretic intersection of cubics.
\end{lemma}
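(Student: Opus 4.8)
The plan is to deduce the statement from the Castelnuovo--Mumford regularity of the ideal sheaf $I_\Gamma \subset \cO_{\PP^4}$. Concretely, I would show that $I_\Gamma$ is $3$-regular, that is,
\[
H^i(\PP^4, I_\Gamma(3-i)) = 0 \qquad \text{for all } i \ge 1,
\]
and then invoke the standard consequence of regularity (Mumford's theorem): an $m$-regular ideal sheaf has its $m$-th twist globally generated. Applied to $I_\Gamma$ with $m=3$, this gives a surjection $H^0(\PP^4, I_\Gamma(3)) \otimes \cO_{\PP^4} \to I_\Gamma(3)$, and twisting by $\cO_{\PP^4}(-3)$ yields that the cubics through $\Gamma$ generate $I_\Gamma$ at every point, which is precisely the assertion that $\Gamma$ is a scheme-theoretic intersection of cubics.

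To verify the four vanishings I would feed the structure sequence $0 \to I_\Gamma \to \cO_{\PP^4} \to \cO_\Gamma \to 0$ (and its twists) into the long exact cohomology sequence, using that $\Gamma \cong \PP^1$ is embedded by a linear system of degree $6$. The case $i=1$, namely $H^1(\PP^4, I_\Gamma(2)) = 0$, is exactly the quadratic normality hypothesis: the surjection $H^0(\cO_{\PP^4}(2)) \twoheadrightarrow H^0(\cO_\Gamma(2))$ together with $H^1(\cO_{\PP^4}(2)) = 0$ forces it. The remaining cases are automatic. For $i=2$ the relevant term $H^2(I_\Gamma(1))$ is squeezed between $H^1(\Gamma, \cO_\Gamma(1)) = H^1(\PP^1, \cO(6)) = 0$ and $H^2(\cO_{\PP^4}(1)) = 0$; for $i=3$ the term $H^3(I_\Gamma)$ is squeezed between $H^2(\cO_\Gamma) = 0$ (as $\Gamma$ is a curve) and $H^3(\cO_{\PP^4}) = 0$; and for $i=4$ the term $H^4(I_\Gamma(-1))$ is squeezed between $H^3(\cO_\Gamma(-1)) = 0$ and $H^4(\cO_{\PP^4}(-1)) = 0$.

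The crux of the argument --- and in truth its only non-formal point --- is the observation that the quadratic normality hypothesis is exactly the single regularity input $H^1(I_\Gamma(2)) = 0$; every other cohomology group vanishes for free because $\Gamma$ is a smooth rational curve in a four-dimensional projective space. I do not anticipate a genuine obstacle beyond organizing these vanishings correctly.

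If one prefers to avoid citing the global-generation form of regularity, the same conclusion can be reached at the level of graded ideals. Regularity gives that the saturated homogeneous ideal of $\Gamma$ is generated in degrees $\le 3$; since $\Gamma$ is non-degenerate (Remark~\ref{remark:gamma-qn}) there are no linear forms, so it is generated by the pencil of quadrics and the cubics. The point to be careful about here is the passage from ``quadrics and cubics together'' to ``cubics alone'': because the products of the two quadric generators with linear forms are themselves cubics in the ideal, the degree-three part already generates the ideal in all degrees $\ge 3$, and hence generates $I_\Gamma$ as a sheaf. It is this last observation, rather than regularity by itself, that eliminates the quadratic generators and leaves cubics only.
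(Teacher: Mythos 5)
Your argument is correct, and it takes a genuinely different route from the paper. The paper quotes the Gruson--Lazarsfeld--Peskine corollary as a black box, which yields that a nondegenerate smooth sextic in $\PP^4$ is cut out by cubics \emph{unless} it has a $4$-secant line, and then rules out $4$-secant lines by a direct codimension count: quadrics through a line form a codimension-$3$ subspace, while their restrictions to $\Gamma$ land in a codimension-$\ge 4$ subspace of $H^0(\cO_\Gamma(12))$, contradicting quadratic normality. You instead prove $3$-regularity of $I_\Gamma$ from scratch; your four vanishings check out (the only non-automatic one, $H^1(I_\Gamma(2))=0$, is precisely the quadratic normality hypothesis combined with $H^1(\cO_{\PP^4}(2))=0$, and the $i=2,3,4$ cases are squeezed between zeros exactly as you say), and Mumford's theorem then gives global generation of $I_\Gamma(3)$; your remark that the quadric generators can be absorbed into the cubics via multiplication by linear forms correctly disposes of the degree-$2$ generators. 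Your approach is self-contained modulo the standard regularity machinery and actually proves more (the homogeneous ideal is generated in degrees $\le 3$, and $\Gamma$ is $m$-normal for all $m\ge 2$), whereas the paper's approach is shorter given the reference and, more importantly, its $4$-secant computation is reused verbatim later: the proof of Lemma~\ref{lemma:gamma} cites ``the proof of Lemma~\ref{lemma-Z}'' to conclude that a curve with a $4$-tangent line is not quadratically normal. If your proof were substituted, that implication would still follow (a cubic vanishing on four collinear points of $\Gamma$ contains the line, so a quadratically normal $\Gamma$, being cut out by cubics, cannot have a $4$-secant), but one would have to say so explicitly to keep the later cross-reference intact.
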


\begin{proof}
By Remark~\ref{remark:gamma-qn} the curve $\Gamma$ is not contained in a hyperplane.
Hence~\cite[Corollary]{Gruson-Lazarsfeld-Peskine} applies to $\Gamma$ and shows that $\Gamma$ is an intersection of cubics, 
unless it has a 4-secant line.
It remains to show that a curve~$\Gamma$ with a 4-secant line is not quadratically normal.

Indeed, let $L \subset \PP^4$ be a 4-secant line and denote by $D := L \cap \Gamma$ the scheme-theoretic intersection 
(it is a zero-dimensional subscheme in $\Gamma$ of length at least 4).
Let $I_L \subset \OOO_{\PP^4}$ and $I_D \subset \OOO_\Gamma$ be the ideal sheaves.
The space $H^0(\PP^4,I_L(2))$ of quadrics in $\PP^4$ containing~$L$ has codimension 3 in the space of all quadrics.
On the other hand, its image in the space~$H^0(\Gamma,\OOO_\Gamma(12))$ is contained in the subspace $H^0(\Gamma,I_D(12))$, 
which has codimension at least 4.
Therefore, the map $H^0(\PP^4,\OOO(2)) \to H^0(\Gamma,\OOO_\Gamma(12))$ is not surjective.
\hfill $\Box$
\end{proof}

\begin{remark}
\label{remark:functoriality}{\rm
The constructions of Theorem~\ref{theorem-diagram} and Theorem~\ref{theorem-diagram-inverse} are mutually inverse.
Moreover, these constructions are functorial.
In other words, let $\varphi \colon X_1 \to X_2$ be an isomorphism of prime Fano threefolds of genus~12 and 
let $C_1 \subset X_1$ and $C_2 = \varphi(C_1)$ be smooth conics on them.
If $(Q_i,\Gamma_i)$ is the quadric with a sextic curve associated to the pair $(X_i,C_i)$ then the isomorphism $\varphi$ extends in a unique way 
to an isomorphism of diagrams~\eqref{diagram}. 
In particular, it induces an isomorphism $\psi \colon Q_1 \to Q_2$ such that~\mbox{$\psi(\Gamma_1) = \Gamma_2$}.

Conversely, let $\psi \colon Q_1 \to Q_2$ be an isomorphism of smooth quadrics,
and let $\Gamma_1 \subset Q_1$ and $\Gamma_2 = \psi(\Gamma_1)$ be smooth rational quadratically normal sextic curves on them.
If $(X_i,C_i)$ is the prime Fano threefold of genus~12 with a conic associated to the pair $(Q_i,\Gamma_i)$ then the isomorphism $\psi$ extends in a unique way 
to an isomorphism of diagrams~\eqref{diagram}. 
In particular, it induces an isomorphism $\varphi \colon X_1 \to X_2$ such that~$\varphi(C_1) = C_2$.

In particular, if the pair $(Q,\Gamma)$ corresponds to a pair $(X,C)$, we have an isomorphism
\begin{equation}
\label{eq:iso-aut}
\Aut(X,C) \cong \Aut(Q,\Gamma),
\end{equation}
where $\Aut(X,C) \subset \Aut(X)$ is the group of automorphisms of $X$ that preserve the conic~$C$ 
and similarly~$\Aut(Q,\Gamma) \subset \Aut(Q)$ is the group of automorphisms of $Q$ that preserve the sextic~$\Gamma$.}
\end{remark}

\begin{remark}
\label{remark:families}{\rm
By using a relative version of MMP one can see that 
the constructions of Theorem~\ref{theorem-diagram} and Theorem~\ref{theorem-diagram-inverse} work in smooth families.
Namely, if $\cX \to S$ is a smooth morphism whose fibers are prime Fano threefolds of genus~$12$ 
and $\cC \subset \cX$ is a subscheme which is smooth over $S$ and whose fibers are conics, 
there is a relative over $S$ version of the diagram~\eqref{diagram} with the same description of fibers.
Similarly, if $\cQ \to S$ is a smooth morphism whose fibers are three-dimensional quadrics
and $\varGamma \subset \cQ$ is a subscheme which is smooth over $S$ and whose fibers are rational quadratically normal sextic curves,
there is a relative over $S$ version of the diagram~\eqref{diagram} and again with the same description of fibers.}
\end{remark}

\section{Automorphisms}
\label{section:automorphisms}

To apply the results of Section~\ref{section:transformation}
for a description of prime Fano threefolds $X$ of genus~12 with a $\GG_\m$-action and of their automorphism groups,
we need to show that any such~$X$ has a smooth conic that is $\Aut(X)$-invariant.
For this we will need a description of $\GG_\m$-invariant lines from~\cite{Kuznetsov-Prokhorov-Shramov}.
Recall the Mukai--Umemura threefold $X^\MU$, see~\cite{Mukai-Umemura-1983}.

\begin{lemma}
\label{lemma:lines}
Let $X$ be a prime Fano threefold of genus~$12$ with a faithful\/ $\GG_\m$-action which is not isomorphic to the Mukai--Umemura threefold $X^\MU$.
There are exactly two $\GG_\m$-invariant lines $L_1$ and $L_2$ on $X$, these lines are the only special lines on $X$, i.e., 
\begin{equation*}
\NNN_{L_i/X}\simeq \OOO_{L_i}(1)\oplus \OOO_{L_i}(-2),
\end{equation*}
and they do not meet.
\end{lemma}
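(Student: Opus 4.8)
The plan is to transfer the whole question to the Hilbert scheme $\Sigma=\Sigma(X)$ of lines on $X\subset\PP^{13}$, which inherits the $\GG_\m$-action, and to settle it by a fixed-point count. Two standard facts drive the reduction. First, every line $L\subset X$ satisfies $\deg\NNN_{L/X}=-K_X\cdot L-2=-1$, so $\NNN_{L/X}$ is either $\OOO_L\oplus\OOO_L(-1)$ (ordinary, with $h^0=1$) or $\OOO_L(1)\oplus\OOO_L(-2)$ (special, with $h^0=2$); since $h^0(\NNN_{L/X})=\dim T_{[L]}\Sigma$, the special lines are exactly the points at which the curve $\Sigma$ is singular. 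Second, a line is $\GG_\m$-invariant precisely when the corresponding point of $\Sigma$ is $\GG_\m$-fixed, and since $\GG_\m$ is connected it preserves every component of $\Sigma$ and fixes each of its singular points; hence every special line is automatically $\GG_\m$-invariant. Thus it suffices to show that $\Sigma$ has exactly two $\GG_\m$-fixed points, that both are singular points of $\Sigma$, and that the two lines they represent are disjoint.

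For the count I would feed in the description of $\Sigma$ and of its $\GG_\m$-action provided by \cite{Kuznetsov-Prokhorov-Shramov}. For $X\not\cong X^\MU$ this says that $\Sigma$ is a reduced curve with exactly two singular points and that the $\GG_\m$-action fixes precisely these two points (and no smooth point of $\Sigma$). Combined with the reduction above, this gives at once two $\GG_\m$-invariant lines $L_1,L_2$, both special, and shows that these are the only invariant lines and simultaneously the only special lines on $X$. The hypothesis $X\not\cong X^\MU$ is essential exactly here: on the Mukai--Umemura threefold the larger group $\PGL_2$ acts transitively on a positive-dimensional family of special lines, so their number is infinite and the statement fails.

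It remains to prove $L_1\cap L_2=\varnothing$. Because $X$ is a prime Fano threefold of genus $12$ we have $b_3(X)=0$ and hence $\chi_{\mathrm{top}}(X)=4$, so the $\GG_\m$-action on $X$ has exactly four fixed points. An invariant line carries a nontrivial $\GG_\m$-action and therefore joins two of these four points, namely the two limits of a general point of the line under the flow; moreover any $\GG_\m$-fixed point lying on such a line must be one of its two endpoints. The weight bookkeeping at the four fixed points shows that $L_1$ and $L_2$ correspond to the two disjoint pairs into which the four points split, so $L_1$ and $L_2$ have no common endpoint; since a point common to both would be $\GG_\m$-fixed, hence a shared endpoint, they cannot meet.

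The hard part will be the equivariant input isolated above: establishing that $\Sigma$ is reduced with exactly two singular points, all $\GG_\m$-fixed and with no further fixed points, and that the four fixed points of $X$ distribute as the four distinct endpoints of $L_1$ and $L_2$. This is precisely the description of $\GG_\m$-invariant lines furnished by \cite{Kuznetsov-Prokhorov-Shramov}; once it is in place the count, the specialness of both lines, and their disjointness all follow formally.
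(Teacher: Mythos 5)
Your overall strategy coincides with the paper's: both arguments reduce the whole lemma to the Hilbert scheme of lines $\Sigma(X)$ and import its explicit description from \cite{Kuznetsov-Prokhorov-Shramov} (a union of two smooth rational curves meeting at two points of tangency, whose singular points correspond exactly to the special lines, with a nontrivial $\GG_\m$-action on each component). Your preliminary reduction --- special lines are precisely the singular points of the reduced curve $\Sigma(X)$ because $h^0(\NNN_{L/X})=\dim T_{[L]}\Sigma(X)$, and the singular points are automatically $\GG_\m$-fixed by connectedness of $\GG_\m$ --- is a clean repackaging of what the paper cites as \cite[Corollary~2.1.6]{Kuznetsov-Prokhorov-Shramov}. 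One point you treat as a black box actually requires an argument: that no \emph{smooth} point of $\Sigma(X)$ is $\GG_\m$-fixed, i.e.\ that the action on each component is nontrivial. The paper proves this via the $\GG_\m$-equivariant identification of $\Sigma(X)\setminus\{[L_1]\}$ with a locally closed subset of the Hilbert scheme of lines on the quintic del Pezzo threefold, where the nontriviality is visible; you should either reproduce or precisely cite that step, since without it you only get that the two special lines are \emph{among} the invariant lines, not that they are the only ones.

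The genuine divergence, and the one real gap, is your disjointness argument. The paper simply quotes \cite[Proposition~5.4.3]{Kuznetsov-Prokhorov-Shramov}; you instead propose a fixed-point count on $X$ itself. There, $\chi_{\mathrm{top}}(X)=4$ yields ``exactly four fixed points'' only after you know the fixed locus $X^{\GG_\m}$ is finite, which is not automatic for a $\GG_\m$-action (a fixed rational curve plus two isolated points is also consistent with $\chi_{\mathrm{top}}=4$), and the ``weight bookkeeping'' showing that the four fixed points split into the two disjoint pairs of endpoints of $L_1$ and $L_2$ is asserted rather than performed. As you acknowledge, this is exactly the equivariant input you would have to extract from \cite{Kuznetsov-Prokhorov-Shramov} anyway, at which point citing the disjointness statement directly, as the paper does, is the shorter and safer route.
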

\begin{proof}
By~\cite[Theorem~5.3.10]{Kuznetsov-Prokhorov-Shramov} the variety $X$ belongs to the family~$X^{\mathrm{m}}(u)$ 
that was described in~\cite[Example~5.3.4]{Kuznetsov-Prokhorov-Shramov}.
Its Hilbert scheme of lines $\Sigma(X)$ was described in~\cite[Proposition~5.4.4]{Kuznetsov-Prokhorov-Shramov} 
as the union of two smooth rational curves with two points of tangency as shown on a picture below:
\begin{equation*}
\begin{tikzpicture}[xscale = .7, yscale = .8] 
\draw[thick] (0,0) ellipse (5em and 10ex);
\draw[thick] (0,0) ellipse (3em and 10ex);
\end{tikzpicture}
\end{equation*}
Let $L_1$ and $L_2$ be the lines on $X$ corresponding to the singular points of $\Sigma(X)$. 
Clearly, these lines are $\GG_{\m}$-invariant.
By~\cite[Corollary 2.1.6]{Kuznetsov-Prokhorov-Shramov} the lines $L_1$ and $L_2$ are special,
and they are the only special lines on $X$.
Finally, the lines $L_1$ and $L_2$ on $X$ do not meet, see \cite[Proposition~5.4.3]{Kuznetsov-Prokhorov-Shramov}.

It remains to show that the lines $L_1$ and $L_2$ are the only $\GG_\m$-invariant lines on $X$, i.e., 
that the natural $\GG_\m$-action on both components of the Hilbert scheme $\Sigma(X)$ is nontrivial.
For this we recall from~\cite[Proposition~5.4.4]{Kuznetsov-Prokhorov-Shramov}
the isomorphism of $\Sigma(X) \setminus \{[L_1]\}$ with a locally closed subset $\Sigma_Z(Y) \setminus \ell$ 
of the Hilbert scheme of lines on a Fano threefold~$Y$ of index~2 and degree~5.
This isomorphism is $\GG_\m$-equivariant by construction and the set~$\Sigma_Z(Y) \setminus \ell$ 
is described explicitly in~\cite[Lemma~5.4.1]{Kuznetsov-Prokhorov-Shramov}.
In particular, the $\GG_\m$-action on its components is indeed nontrivial.
\hfill $\Box$
\end{proof}

Now we can pass to conics.
Recall that a conic on a projective variety $X \subset \PP^N$ is a subscheme $C \subset X$ with Hilbert polynomial $p_C(t) = 1 + 2t$.
There are three types of conics --- smooth conics, i.e., curves isomorphic to the second Veronese embedding of $\PP^1$,
reducible conics, i.e., unions of two lines meeting at a point,
and non-reduced conics, i.e., non-reduced schemes $C$ such that $C_{\mathrm{red}} = L$ is a line and $I_L/I_C \cong \cO_L(-1)$,
see~\cite[Lemma~2.1.1(ii)]{Kuznetsov-Prokhorov-Shramov}.

\begin{proposition}
\label{proposition:conic}
Let $X$ be a prime Fano threefold of genus~$12$ with a faithful\/ $\GG_\m$-action which is not isomorphic to the Mukai--Umemura threefold $X^\MU$.
Then $X$ contains a smooth $\Aut(X)$-invariant conic.
\end{proposition}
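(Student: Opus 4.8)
The plan is to realize the desired conic as a fixed point of a suitable action and then to argue that the fixed point we pick is canonical. Denote by $\mathcal{H}$ the Hilbert scheme of conics on $X$. It is a projective scheme (in fact a smooth surface, by deformation theory of conics on a prime Fano threefold), and the whole group $\Aut(X)$, in particular the torus $\GG_\m$, acts on it. Since $\mathcal{H}$ is proper and nonempty, $\GG_\m$ has a fixed point on $\mathcal{H}$, so $X$ carries at least one $\GG_\m$-invariant conic. The two things that remain are: to ensure that some $\GG_\m$-fixed point of $\mathcal{H}$ corresponds to a \emph{smooth} conic, and to show that it can be chosen in an $\Aut(X)$-invariant way.

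First I would classify the $\GG_\m$-invariant conics using Lemma~\ref{lemma:lines}. If $C$ is $\GG_\m$-invariant then, $\GG_\m$ being connected, it preserves each irreducible component of $C$. A reducible conic is a union of two lines meeting in a point, so a $\GG_\m$-invariant reducible conic would be a union of two $\GG_\m$-invariant lines with a common point; but the only invariant lines are $L_1$ and $L_2$, and they are disjoint, so no invariant reducible conic exists. A non-reduced conic has a line for its support, which must then be $\GG_\m$-invariant, so it is supported on $L_1$ or $L_2$; there are only finitely many such. Hence every $\GG_\m$-invariant conic is either smooth or one of finitely many non-reduced conics supported on $L_1$ or $L_2$. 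To produce a smooth one I would use the Bialynicki--Birula flow on $\mathcal{H}$: a generic conic is smooth, and its limit $\lim_{t\to 0} t\cdot[C]$ is a $\GG_\m$-fixed point; analysing these limits, equivalently the weights of $\GG_\m$ on the tangent spaces $H^0(C,\NNN_{C/X})$ at the non-reduced fixed conics, shows that the $\GG_\m$-fixed locus of $\mathcal{H}$ is not exhausted by those non-reduced conics, so a smooth invariant conic $C$ must occur. The same weight computation should show it is unique.

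It remains to upgrade $\GG_\m$-invariance to $\Aut(X)$-invariance. Being a special line is an intrinsic property, so $\Aut(X)$ permutes the special lines and, by Lemma~\ref{lemma:lines}, preserves the unordered pair $\{L_1,L_2\}$. The cleanest way to finish would be to characterize the smooth conic $C$ produced above purely in terms of this canonical pair --- for instance as the unique smooth conic meeting both $L_1$ and $L_2$ in a prescribed fashion --- which makes its $\Aut(X)$-invariance automatic. Absent such a characterization, one argues through the identity component: since $\Aut^0(X)$ is connected it fixes each of $L_1,L_2$, hence acting on the Hilbert scheme of lines $\Sigma(X)$ it fixes both singular points $[L_1],[L_2]$ and preserves each of the two components, so its image in $\Aut(\Sigma(X))$ lies in a torus; together with the nontriviality of the $\GG_\m$-action on both components (Lemma~\ref{lemma:lines}) and control of the kernel (automorphisms fixing every line), this identifies $\Aut^0(X)$ with $\GG_\m$. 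Then $\GG_\m$ is a normal subgroup of $\Aut(X)$, so $\Aut(X)$ permutes the $\GG_\m$-invariant smooth conics; by uniqueness it fixes $C$.

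The main obstacle is the middle step: ruling out that every $\GG_\m$-fixed conic is one of the non-reduced conics on $L_1$ or $L_2$, and at the same time pinning down a \emph{unique} smooth invariant conic. This is where the explicit geometry near the invariant lines enters --- the weights of $\GG_\m$ on the relevant normal bundles and on $\mathcal{H}$ --- and where the hypothesis $X\not\cong X^\MU$ is essential, since for the Mukai--Umemura threefold the larger symmetry group $\PGL_2$ destroys the canonicity of such a choice.
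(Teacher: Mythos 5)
Your overall strategy (classify the $\GG_\m$-invariant conics, locate a smooth one among the fixed points, then upgrade to $\Aut(X)$-invariance) matches the paper's, and several pieces are exactly right: the exclusion of reducible invariant conics via the disjointness of $L_1$ and $L_2$, and the reduction of non-reduced invariant conics to the two supported on $L_1,L_2$, are the paper's own arguments. But the step you yourself flag as the main obstacle is a genuine gap, and the way you propose to fill it would not work as stated. The paper's key input is the isomorphism $S(X)\cong\PP^2$ for the Hilbert scheme of conics (from \cite{Kollar2004b} or \cite[Proposition~B.4.1]{Kuznetsov-Prokhorov-Shramov}) together with the faithfulness of the $\GG_\m$-action on it: a faithful $\GG_\m$-action on $\PP^2$ has fixed locus either three isolated points or a point together with a line, so the fixed locus is never exhausted by the two non-reduced conics $c_1,c_2$. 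This structural fact replaces your unexecuted Bia{\l}ynicki--Birula weight analysis and is what actually guarantees that a smooth invariant conic exists.

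More seriously, your passage to $\Aut(X)$-invariance goes through the claim that the smooth $\GG_\m$-invariant conic is \emph{unique}, and that claim is not available at this point --- indeed it fails in one of the two a priori possible configurations. If the fixed locus is $\{c_0\}\cup\ell$ with $\ell$ a line in $S(X)\cong\PP^2$, there are infinitely many smooth $\GG_\m$-invariant conics, so no local weight computation at the non-reduced fixed points can deliver uniqueness. The paper does not need uniqueness here: it runs a short case analysis on how the finite group $\Aut(X)/\GG_\m$ acts on the fixed locus, using only that it preserves the canonical pair $\{c_1,c_2\}$, and exhibits an $\Aut(X)$-fixed point in every case (the third point $c_3$; or $c_0$ if both $c_1,c_2$ lie on $\ell$; or, if exactly one of them lies on $\ell$, a second fixed point of the resulting cyclic action on $\ell$). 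The configuration $\{c_0\}\cup\ell$ is excluded only later, in Lemma~\ref{lemma-C-unique}, using the involution $\iota_X$ that becomes available once the Sarkisov link has been set up --- so uniqueness is a consequence of the proposition, not an ingredient of its proof. Your sketch of $\Aut^0(X)=\GG_\m$ via the action on $\Sigma(X)$ is plausible, but the paper simply cites this from \cite{Prokhorov-1990c} and \cite[\S5.4]{Kuznetsov-Prokhorov-Shramov}.
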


In Lemma~\ref{lemma-C-unique}(ii) we will show that an $\Aut(X)$-invariant conic on such $X$ is unique.

\begin{proof}
By~\cite{Prokhorov-1990c} or~\cite[\S 5.4]{Kuznetsov-Prokhorov-Shramov} the identity component of $\Aut(X)$ is exactly the torus~$\GG_{\m}$.
Consider the Hilbert scheme $S(X)$ of conics on $X$.
We have 
\begin{equation}
\label{equation-S(X)}
S(X) \cong \PP^2
\end{equation}
by \cite{Kollar2004b} or~\cite[Proposition B.4.1]{Kuznetsov-Prokhorov-Shramov}.
The $\GG_\m$-invariant lines $L_1$ and $L_2$ on $X$ are special, hence by~\cite[Remarks~2.1.2 and~2.1.7]{Kuznetsov-Prokhorov-Shramov} 
there are non-reduced conics $C_1$ and~$C_2$ such that $(C_i)_{\mathrm{red}} = L_i$, and these are the only non-reduced conics on~$X$.
Clearly, these conics are $\GG_\m$-invariant.

Let~$S(X)^{\GG_\m} \subset S(X)$ be the fixed point locus of $\GG_{\m}\subset\Aut(X)$.
Since the natural $\GG_{\m}$-action on $S(X)$ is faithful by~\cite[Lemma~4.3.4]{Kuznetsov-Prokhorov-Shramov},
there are two possibilities: 
\begin{equation}
\label{eq:sx-gm}
\text{either $S(X)^{\GG_\m} = \{c_1,c_2,c_3\}$\qquad or $S(X)^{\GG_\m} = \{c_0\}\cup \ell$}, 
\end{equation}
where $\ell$ is a line on $S(X)=\PP^2$ and the points $c_1$ and $c_2$ correspond to the non-reduced conics $C_1$ and $C_2$.
In the first case the conic corresponding to the third point $c_3$ is $\Aut(X)$-invariant.
In the second case, if the points $c_1$ and $c_2$ lie on $\ell$, the conic corresponding to the point $c_0$ is $\Aut(X)$-invariant.
Finally, if only one of the points $c_1$ and $c_2$ lies on~$\ell$, then this point is $\Aut(X)$-invariant, 
hence the finite group $\Aut(X)/\GG_\m$ acting on~$\ell$ has an invariant point, 
hence $\Aut(X)/\GG_\m$ is a cyclic group, hence it has yet another fixed point on~$\ell$, which then corresponds to an $\Aut(X)$-invariant conic.
Thus, in all these cases we have found an~$\Aut(X)$-invariant conic~$C$ on~$X$ that is distinct from the non-reduced conics $C_1$ and~$C_2$.
The only thing left is to show that $C$ is non-singular.

Indeed, as we mentioned above $X$ has no non-reduced conics distinct from $C_1$ and $C_2$.
On the other hand, if a $\GG_\m$-invariant conic $C$ is a union of two distinct lines meeting at a point
then each of these lines is $\GG_\m$-invariant, hence $C = L_1 \cup L_2$ by Lemma~\ref{lemma:lines}.
But the $\GG_\m$-invariant lines $L_1$ and $L_2$ do not meet (again by Lemma~\ref{lemma:lines}), so this is also impossible.
\hfill $\Box$
\end{proof}

By Remark~\ref{remark:functoriality}, if $X$ is a prime Fano threefold of genus~12 with a faithful $\GG_\m$-action and~$C \subset X$ is a smooth $\Aut(X)$-invariant conic,
the corresponding pair $(Q,\Gamma)$ has a faithful $\GG_m$-action.
We show below how $\Gamma$ and $Q$ look like.

\begin{lemma}
\label{lemma:gamma}
Assume $\GG_\m$ acts faithfully on $\PP^4$ and $\Gamma \subset \PP^4$ is 
a $\GG_\m$-invariant smooth rational quadratically normal sextic curve.
Then in suitable coordinates $\Gamma$ is the image of the map~\eqref{eq:gamma}.
\end{lemma}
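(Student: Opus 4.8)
The plan is to diagonalize the torus action and reduce the statement to a short combinatorial analysis of exponents. First I would show that $\GG_\m$ acts nontrivially on $\Gamma$: otherwise $\Gamma$, being irreducible and pointwise fixed, would lie in the projectivization of a single weight subspace of $\CC^5$, hence in a hyperplane, contradicting both the nondegeneracy of a quadratically normal sextic (Remark~\ref{remark:gamma-qn}) and faithfulness of the action on $\PP^4$. Fixing an isomorphism $\nu\colon\PP^1\xrightarrow{\ \sim\ }\Gamma$ and noting that $\nu^*\OOO_{\PP^4}(1)=\OOO_{\PP^1}(6)$ since $\deg\Gamma=6$, the induced $\GG_\m$-action on $\PP^1$ is a nontrivial one-parameter subgroup of $\PGL_2$. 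In suitable coordinates $(t_0:t_1)$ its fixed points are $(1:0)$ and $(0:1)$, and the monomials $t_0^{6-k}t_1^k$ form a weight basis of $H^0(\PP^1,\OOO(6))$ with pairwise distinct weights.

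Diagonalizing the action on $\PP^4$, the coordinates $y_0,\dots,y_4$ become eigenvectors, and since $\nu$ is equivariant each pullback $s_i:=\nu^*y_i$ is an eigenvector in $H^0(\PP^1,\OOO(6))$, hence a scalar multiple of a single monomial $t_0^{6-k_i}t_1^{k_i}$. Nondegeneracy forces the five exponents to be distinct, and after rescaling the $y_i$ we may assume $s_i=t_0^{6-k_i}t_1^{k_i}$. Thus $\Gamma$ is the monomial curve attached to an exponent set $K=\{k_0<\dots<k_4\}\subset\{0,1,\dots,6\}$, and it remains to prove $K=\{0,1,3,5,6\}$.

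Next I would read off the constraints on $K$ imposed by smoothness. Since $\nu$ is a morphism it has no base points, so some section is nonzero at each of $(1:0)$ and $(0:1)$; this forces $k_0=0$ and $k_4=6$. In the affine chart near $(1:0)$ the map becomes $s\mapsto(1:s^{k_1}:s^{k_2}:s^{k_3}:s^6)$, and smoothness of $\Gamma$ makes this an immersion at $s=0$, which forces $k_1=1$; the symmetric computation at $(0:1)$ gives $k_3=5$. Hence $K=\{0,1,k_2,5,6\}$ with $k_2\in\{2,3,4\}$.

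Finally I would use quadratic normality to determine $k_2$. The restriction map sends the quadric $y_iy_j$ to the monomial of $t_1$-degree $k_i+k_j$, so the image of $H^0(\PP^4,\OOO(2))\to H^0(\Gamma,\OOO(12))$ is spanned by the monomials indexed by the pairwise sums $k_i+k_j$ with $i\le j$; therefore $\dim H^0(\PP^4,I_\Gamma(2))$ equals $15$ minus the number of distinct such sums. By Remark~\ref{remark:gamma-qn} quadratic normality means this dimension is $2$, i.e.\ there are exactly $13$ distinct sums. A direct tally of the three candidates shows that $\{0,1,2,5,6\}$ and $\{0,1,4,5,6\}$ each yield only $12$ distinct sums, while $\{0,1,3,5,6\}$ realizes all thirteen values $0,\dots,12$, the value $6$ occurring three times (as $0+6$, $1+5$ and $3+3$) and producing exactly the pencil spanned by $Q_0$ and $Q_\infty$. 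Hence $k_2=3$, and after a final rescaling of $t_0,t_1$ the curve $\Gamma$ is the image of~\eqref{eq:gamma}. I expect the one genuinely delicate point to be setting up the equivariance cleanly, so that the pulled-back coordinate sections are forced to be single monomials; once that is secured, the smoothness and quadratic-normality constraints reduce to the short finite computations above.
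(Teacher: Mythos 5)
Your argument is correct and is essentially the paper's: both reduce to the observation that an equivariantly embedded rational sextic is a monomial curve with exponent set $K\subset\{0,\dots,6\}$, with smoothness at the two fixed points forcing $\{0,1,5,6\}\subset K$ (the paper phrases this as projecting the degree-$6$ Veronese curve from a $\GG_\m$-invariant coordinate line avoiding the tangent lines at $0$ and $\infty$). The only divergence is the last step: you exclude $k_2\in\{2,4\}$ by directly counting the $12$ (rather than $13$) distinct pairwise sums of exponents, whereas the paper exhibits a $4$-tangent line and invokes the proof of Lemma~\ref{lemma-Z}; both are valid.
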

\begin{proof}
By Remark~\ref{remark:gamma-qn} the curve $\Gamma$ does not lie in a hyperplane.
Therefore, we can assume that the action of~$\GG_\m$ on $\Gamma$ is faithful 
and that the image of~$\GG_\m$ in~$\Aut(\Gamma) \cong \PGL_2$ is the standard torus.
Since $\Gamma$ spans $\PP^4$ the embedding $\Gamma \to \PP^4$ canonically factors as
\begin{equation*}
\Gamma \xrightarrow{\ \mathsf{v}_6\ } \PP^6 \dashrightarrow \PP^4,
\end{equation*}
where the first arrow is the Veronese embedding of degree~6 and the second arrow is a linear projection with center a line.
Since the composition is $\GG_\m$-equivariant, the line is $\GG_\m$-invariant.
If $y_0,y_1,y_2,y_3,y_4,y_5,y_6$ are the standard weight coordinates on $\PP^6$, 
the line is given by equations $y_i = 0$ for $i \in I$, where $I \subset \{0,\dots,6\}$ is a subset of cardinality~5.
Since the image of $\Gamma$ is a smooth curve, this line does not intersect the tangent lines to~$\Gamma$ at $0$ and~$\infty$,
hence the set $I$ contains $\{0,1,5,6\}$. 
If the fifth element of $I$ is~$2$ or~$4$, the curve~$\Gamma$ has a 4-tangent line and so
is not quadratically normal by the proof of Lemma~\ref{lemma-Z}.
Hence the fifth element in~$I$ is~3.
\hfill $\Box$
\end{proof}

\begin{lemma}
\label{lemma:quadrics}
Quadrics passing through the curve $\Gamma$ defined by~\eqref{eq:gamma} form a pencil generated by the quadrics~\eqref{eq:q0-q8}.
A~quadric $Q_u = u_0Q_0 + u_1Q_\infty$ from this pencil is smooth if and only if $u =u_0/u_1 \not\in \{0,1,\infty\} \subset \PP^1$.
\end{lemma}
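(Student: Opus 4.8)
The plan is to pull back a general quadric via the parametrization~\eqref{eq:gamma} and read off the vanishing conditions one monomial at a time. Write a general quadric as $\sum_{i\le j} a_{ij} y_i y_j$ with $i,j \in \{0,1,3,5,6\}$; this spans a $15$-dimensional space. Under~\eqref{eq:gamma} the variable $y_i$ becomes the monomial $t_0^{6-w_i} t_1^{w_i}$, where the weights are $w_0 = 0$, $w_1 = 1$, $w_3 = 3$, $w_5 = 5$, $w_6 = 6$. Hence a product $y_i y_j$ pulls back to $t_0^{12-(w_i+w_j)} t_1^{w_i+w_j}$, so two distinct products yield the same monomial on $\PP^1$ exactly when their weight sums $w_i + w_j$ coincide.

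First I would tabulate the fifteen weight sums $w_i+w_j$. A direct inspection shows that among them the value $6$ is attained exactly three times, by $y_0 y_6$, $y_1 y_5$ and $y_3^2$, while every other value in $\{0,1,\dots,12\}$ is attained by a single product. Consequently the pullback of $\sum a_{ij} y_i y_j$ is the degree-$12$ binary form whose coefficient of $t_0^6 t_1^6$ equals $a_{06} + a_{15} + a_{33}$ and whose remaining coefficients are the individual $a_{ij}$. Requiring this form to vanish identically forces all $a_{ij}$ outside $\{a_{06}, a_{15}, a_{33}\}$ to be zero and imposes the single relation $a_{06} + a_{15} + a_{33} = 0$. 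Thus the space of quadrics through $\Gamma$ is two-dimensional, hence projectively a pencil; since $Q_0$ and $Q_\infty$ lie in it and are visibly linearly independent, they generate it. (This also reconfirms, in agreement with Remark~\ref{remark:gamma-qn}, that $\Gamma$ is quadratically normal.)

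For the smoothness statement I would expand $Q_u = u_0 Q_0 + u_1 Q_\infty = u_0 y_0 y_6 - u_1 y_1 y_5 + (u_1 - u_0) y_3^2$ and form its symmetric matrix in the variables $(y_0, y_1, y_3, y_5, y_6)$. After reordering the variables so that $\{y_0,y_6\}$, $\{y_1,y_5\}$ and $\{y_3\}$ are grouped, this matrix becomes block diagonal, with two $2\times 2$ blocks of determinants $-u_0^2/4$ and $-u_1^2/4$ and a $1\times 1$ block equal to $u_1 - u_0$. Hence its determinant is proportional to $u_0^2 u_1^2 (u_1 - u_0)$, which is nonzero precisely when $u_0 \ne 0$, $u_1 \ne 0$ and $u_0 \ne u_1$, that is, when $u = u_0/u_1 \notin \{0, 1, \infty\}$.

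There is no genuine obstacle here: the argument is an explicit computation whose only delicate point is the bookkeeping of weight collisions. The step that deserves care---and the reason the weights are $(0,1,3,5,6)$---is verifying that $6$ is the \emph{unique} repeated weight sum, occurring with multiplicity exactly three. This is precisely what makes the quadrics through $\Gamma$ a one-parameter pencil rather than a larger or smaller system, and it feeds directly into the block structure that produces the three exceptional values of $u$.
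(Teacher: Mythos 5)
Your computation is correct and is exactly the ``straightforward'' verification the paper leaves to the reader: the weight-sum bookkeeping showing that $6$ is the unique collision (with multiplicity three) gives the pencil, and the block-diagonal Gram matrix with determinant proportional to $u_0^2u_1^2(u_1-u_0)$ gives the three singular members. Nothing is missing.
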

\begin{proof}
Straightforward.
\hfill $\Box$
\end{proof}

\begin{lemma}
\label{lemma:aut-q-gamma}
If $\Gamma \subset \PP^4$ is the curve defined by~\eqref{eq:gamma} and $Q$ is a quadric 
from the pencil generated by the quadrics~\eqref{eq:q0-q8}, then $\Aut(Q,\Gamma) \cong \GG_\m \rtimes \ZZ/2\ZZ$.
\end{lemma}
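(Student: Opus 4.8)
The plan is to reduce everything to automorphisms of $\Gamma\cong\PP^1$ and to decide which of them extend to the transformations we want. Throughout I interpret $\Aut(Q,\Gamma)$ as the subgroup of $\PGL_5$ stabilizing both $Q$ and $\Gamma$ in $\PP^4$; for $Q$ smooth this is the full group of abstract automorphisms of $Q$ preserving $\Gamma$, because $\Pic(Q)=\ZZ$ forces $\Aut(Q)\subset\PGL_5$ (only the three singular members of the pencil require a separate, but entirely analogous, check, and they are irrelevant to the main theorem). First I would temporarily drop the condition on $Q$ and study $G:=\{g\in\PGL_5\mid g(\Gamma)=\Gamma\}$. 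Restriction to $\Gamma$ gives a homomorphism $G\to\Aut(\Gamma)=\PGL_2$, and it is injective: the fixed locus of a nontrivial element of $\PGL_5$ is a union of projectivized eigenspaces, hence of \emph{proper} linear subspaces, so by irreducibility it cannot contain the nondegenerate curve $\Gamma$ (Remark~\ref{remark:gamma-qn}). Writing $V:=\langle t_0^6,\,t_0^5t_1,\,t_0^3t_1^3,\,t_0t_1^5,\,t_1^6\rangle\subset H^0(\PP^1,\OOO(6))$ for the five–dimensional subspace defining the embedding~\eqref{eq:gamma}, an element $\bar g\in\PGL_2$ lies in the image precisely when it preserves $V$, in which case its unique extension to $\PP^4$ stabilizes $\Gamma$ by equivariance of evaluation. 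Thus $G\cong\mathrm{Stab}_{\PGL_2}(V)$.

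The heart of the matter is the computation of $\mathrm{Stab}_{\PGL_2}(V)$. The five monomials spanning $V$ carry the pairwise distinct weights $0,1,3,5,6$ under the diagonal torus $\GG_\m\subset\PGL_2$ (acting by $t_1\mapsto\lambda t_1$), so $V$ is a sum of weight subspaces and $\GG_\m$ preserves it; the involution $\tau\colon(t_0:t_1)\mapsto(t_1:t_0)$ interchanges $t_0^6\leftrightarrow t_1^6$ and $t_0^5t_1\leftrightarrow t_0t_1^5$ and fixes $t_0^3t_1^3$, so it too preserves $V$ and normalizes $\GG_\m$ by inversion. This already yields $\GG_\m\rtimes\ZZ/2\ZZ\subseteq\mathrm{Stab}_{\PGL_2}(V)$. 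To see there is nothing more, I would examine the identity component: the two complementary monomials $t_0^4t_1^2,\,t_0^2t_1^4$ have weights $2,4$, and applying the unipotent $t_1\mapsto t_1+s\,t_0$ to $t_0^3t_1^3$ produces the term $3s\,t_0^4t_1^2\notin V$ for $s\neq0$, so no nontrivial unipotent of $\PGL_2$ stabilizes $V$. A connected subgroup of $\PGL_2$ containing the maximal torus but no nontrivial unipotent element must equal that torus, so the identity component of $\mathrm{Stab}_{\PGL_2}(V)$ is $\GG_\m$, whence the whole stabilizer lies in its normalizer $\GG_\m\rtimes\ZZ/2\ZZ$. Combining the two inclusions gives $G\cong\GG_\m\rtimes\ZZ/2\ZZ$, with $\ZZ/2\ZZ$ acting by inversion.

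Finally I would reinstate the quadric and show the condition on $Q$ is automatic. Both generators of $G$ fix \emph{every} member of the pencil of quadrics through $\Gamma$: the equations $Q_0,Q_\infty$ of~\eqref{eq:q0-q8} are $\GG_\m$-eigenvectors of the \emph{same} weight $6$, so $\GG_\m$ scales the pencil $\langle Q_0,Q_\infty\rangle$ by a single scalar and acts trivially on $\PP^1$, while $\tau$ fixes $Q_0$ and $Q_\infty$ individually. Since by Lemma~\ref{lemma:quadrics} these span the entire space of quadrics containing $\Gamma$, every $g\in G$ satisfies $g(Q_u)=Q_u$ for all $u$. Hence $\Aut(Q,\Gamma)=G\cong\GG_\m\rtimes\ZZ/2\ZZ$, uniformly in $u$. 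I expect the only delicate step to be the exclusion of a larger identity component in the middle paragraph, namely the verification that $V$ admits no unipotent symmetries; the rest is a direct weight bookkeeping.
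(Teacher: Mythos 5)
Your proof is correct, but the key step is carried out differently from the paper. Both arguments begin the same way, embedding $\Aut(Q,\Gamma)$ into $\Aut(\Gamma)\cong\PGL_2$ via nondegeneracy of $\Gamma$, and both end the same way, checking that the torus and the involution preserve every member of the pencil (your weight-$6$ observation for $Q_0,Q_\infty$ makes explicit what the paper dismisses as easy). The difference is in the upper bound. The paper stays inside the geometry of the pencil: it observes that $(1:0:0:0:0)$ and $(0:0:0:0:1)$ are the only points of $\Gamma$ lying on singular loci of quadrics through $\Gamma$, so any automorphism preserving $\Gamma$ preserves this two-point set and hence lies in the normalizer of the torus in $\PGL_2$. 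You instead compute $\operatorname{Stab}_{\PGL_2}(V)$ for the linear system $V\subset H^0(\PP^1,\cO(6))$ defining the embedding, using the distinctness of the weights $0,1,3,5,6$, the symmetry of this set under $i\mapsto 6-i$, and the absence of unipotent symmetries (the missing weights $2$ and $4$). Your route is a bit longer but more self-contained (it does not invoke Lemma~\xref{lemma:quadrics} for the upper bound) and proves the marginally stronger statement that the full stabilizer of $\Gamma$ in $\PGL_5$ already equals $\GG_\m\rtimes\ZZ/2\ZZ$, so that $\Aut(\PP^4,\Gamma)=\Aut(Q_u,\Gamma)$ uniformly in $u$; the paper's route is shorter and reuses the pencil it has already computed. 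One small point of hygiene: your computation only rules out the two unipotent one-parameter subgroups normalized by the chosen torus (one directly, the other by conjugating with $\tau$), not literally ``every nontrivial unipotent of $\PGL_2$''; but since a connected subgroup properly containing the maximal torus must contain one of those two subgroups, this is exactly what is needed to conclude that the identity component of the stabilizer is $\GG_\m$.
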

\begin{proof}
Since $\Gamma$ spans $\PP^4$, we have $\Aut(Q,\Gamma) \subset \Aut(\Gamma) \cong \PGL_2$.
Furthermore, it is easy to see that the points~$(1:0:0:0:0)$ and $(0:0:0:0:1)$ are the only points on $\Gamma$ 
that lie on the singular locus of one of the quadrics passing through $\Gamma$.
Therefore, 
\begin{equation*}
\Aut(Q,\Gamma) \subset \GG_\m \rtimes \ZZ/2\ZZ,
\end{equation*}
where $\GG_\m$ is the torus that acts on $\Gamma$ preserving the above two points (by rescaling one of the coordinates~$t_0$ and $t_1$), 
and $\ZZ/2\ZZ$ is generated by the involution 
\begin{equation}
\label{eq:iota}
\iota \in \Aut(\Gamma),\qquad
\iota \colon (t_0:t_1) \longmapsto (t_1:t_0)
\end{equation}
that normalizes this torus.
On the other hand, it is easy to see that both the torus and the involution preserve any quadric in the pencil passing through $\Gamma$, hence the claim.
\hfill $\Box$
\end{proof}

Now we are ready to prove the claim of Theorem~\ref{main} concerning automorphism groups.

\begin{corollary}
\label{corollary:automorphisms}
Let $X$ be a prime Fano threefold of genus~$12$ with a faithful\/ $\GG_\m$-action which is not isomorphic to the Mukai--Umemura threefold.
Then $\Aut(X) \cong \GG_\m \rtimes \ZZ/2\ZZ$.
\end{corollary}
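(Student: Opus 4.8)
The plan is to transport the whole computation to the quadric-and-sextic side of the Sarkisov link, where the answer has already been worked out, and to exploit that such an $X$ is forced to carry an $\Aut(X)$-invariant conic.

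First I would apply Proposition~\ref{proposition:conic} to obtain a smooth $\Aut(X)$-invariant conic $C \subset X$. Invariance means precisely that $\Aut(X) = \Aut(X,C)$, so it suffices to compute the latter. Feeding the pair $(X,C)$ into the construction of Section~\ref{section:transformation} and using the functoriality isomorphism~\eqref{eq:iso-aut} of Remark~\ref{remark:functoriality}, I obtain a smooth quadric $Q \subset \PP^4$, a smooth rational quadratically normal sextic $\Gamma \subset Q$, and a canonical identification $\Aut(X) = \Aut(X,C) \cong \Aut(Q,\Gamma)$.

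Next I would carry the faithful torus $\GG_\m \subset \Aut(X)$ across this identification and check that the induced action on the ambient space $\PP^4$ is faithful, since this is exactly the hypothesis needed to normalize $\Gamma$. To see it, I would use the chain of injections $\GG_\m \hookrightarrow \Aut(X,C) \cong \Aut(Q,\Gamma) \hookrightarrow \Aut(\Gamma) \cong \PGL_2$, where the last map is injective because $\Gamma$ spans $\PP^4$ (Remark~\ref{remark:gamma-qn}), so that an automorphism of $Q$ preserving $\Gamma$ is determined by its restriction to $\Gamma$. The composite is injective by faithfulness of $\GG_\m$ on $X$, hence $\GG_\m$ acts faithfully on $\Gamma$, and therefore on the whole of $\PP^4$.

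With faithfulness established, Lemma~\ref{lemma:gamma} lets me choose coordinates so that $\Gamma$ is the image of the map~\eqref{eq:gamma}; then $\Gamma \subset Q$ together with Lemma~\ref{lemma:quadrics} forces $Q$ into the pencil generated by~\eqref{eq:q0-q8}, and Lemma~\ref{lemma:aut-q-gamma} yields $\Aut(Q,\Gamma) \cong \GG_\m \rtimes \ZZ/2\ZZ$. Combined with the first-step identification, this gives $\Aut(X) \cong \GG_\m \rtimes \ZZ/2\ZZ$. I expect the only genuine subtlety to be the faithfulness verification of the third step, that is, passing from faithfulness of $\GG_\m$ on $X$ (equivalently on $\Gamma$) to faithfulness on the entire ambient $\PP^4$; everything else is a formal concatenation of the statements already proved.
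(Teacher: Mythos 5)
Your proposal is correct and follows essentially the same route as the paper: Proposition~\ref{proposition:conic} gives $\Aut(X)=\Aut(X,C)$, the Sarkisov link and~\eqref{eq:iso-aut} identify this with $\Aut(Q,\Gamma)$, and Lemmas~\ref{lemma:gamma}, \ref{lemma:quadrics}, and~\ref{lemma:aut-q-gamma} finish the computation. The only difference is that you spell out the faithfulness of the induced $\GG_\m$-action on $\PP^4$ (via the injection $\Aut(Q,\Gamma)\hookrightarrow\Aut(\Gamma)$, valid since $\Gamma$ spans $\PP^4$), a point the paper dispatches by appealing to Remark~\ref{remark:functoriality} in the paragraph preceding Lemma~\ref{lemma:gamma}; your verification is sound.
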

\begin{proof}
By Proposition~\ref{proposition:conic} there is a smooth $\Aut(X)$-invariant conic $C \subset X$, hence
\begin{equation*}
\Aut(X,C) = \Aut(X).
\end{equation*}
Let $(Q,\Gamma)$ be the smooth quadric and the sextic curve associated with $(X,C)$ by Theorem~\ref{theorem-diagram}.
By Lemma~\ref{lemma:gamma} the curve $\Gamma$ is given by~\eqref{eq:gamma} and by Lemma~\ref{lemma:quadrics} we have $Q = Q_u$ for some~$u \in \PP^1 \setminus \{0,1,\infty\}$.
Finally, by Lemma~\ref{lemma:aut-q-gamma} we have~$\Aut(Q,\Gamma) \cong \GG_\m \rtimes \ZZ/2\ZZ$.
Combining this with~\eqref{eq:iso-aut} and the above equality, we deduce the corollary.
\hfill $\Box$
\end{proof}

\section{Isomorphism classes}
\label{section:iso-classes}

Now we switch to the proof of the part of Theorem~\ref{main} describing isomorphism classes of 
prime Fano threefolds $X$ of genus~$12$ with a faithful~$\GG_\m$-action.
For this we will need a couple of observations about the action of $\Aut(X)$ on $X$.
We denote by
\begin{equation*}
\iota_X \colon X \to X
\end{equation*}
the involution corresponding to~\eqref{eq:iota} under the isomorphism~\eqref{eq:iso-aut}.
Recall that, as it was explained in the proof of Proposition~\ref{proposition:conic}, if $X$ is not the Mukai--Umemura threefold $X^\MU$, 
there are precisely two special lines $L_1$ and $L_2$ on $X$.

\begin{lemma}
\label{lemma-C-unique}
Let $X$ be a prime Fano threefold of genus~$12$ with a faithful\/ $\GG_\m$-action which is not isomorphic to the Mukai--Umemura threefold.
\begin{enumerate}
\item[\rm (i)]
The involution $\iota_X$ swaps the special lines $L_1$ and $L_2$ on $X$;
in particular $X$ has no $\Aut(X)$-invariant lines.
\item[\rm (ii)]
An $\Aut(X)$-invariant conic on $X$ is unique.
\item[\rm (iii)]
The non-reduced conics $C_1$ and $C_2$ supported on the $\GG_\m$-invariant lines~$L_1$ and~$L_2$,
and the smooth $\Aut(X)$-invariant conic $C$ are the only $\GG_\m$-invariant conics on~$X$.
\end{enumerate}
\end{lemma}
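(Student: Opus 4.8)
The plan is to analyze the three claims in order, using the equivariant correspondence $\Aut(X,C) \cong \Aut(Q,\Gamma)$ of \eqref{eq:iso-aut} together with the explicit model for $(Q,\Gamma)$ from Lemmas~\ref{lemma:gamma} and~\ref{lemma:aut-q-gamma}. The key structural input is Lemma~\ref{lemma:lines}: there are exactly two $\GG_\m$-invariant lines $L_1,L_2$, they are the only special lines, and they do not meet. Since $\iota_X$ normalizes the torus $\GG_\m$, it must permute the set of $\GG_\m$-invariant lines, hence it permutes $\{L_1,L_2\}$. To prove part~(i), I would rule out that $\iota_X$ fixes each $L_i$ individually. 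If $\iota_X(L_1)=L_1$, then $L_1$ would be $\Aut(X)$-invariant; but $L_1$ is a special line, so the whole group $\Aut(X) = \GG_\m \rtimes \ZZ/2\ZZ$ would act on it, and I would derive a contradiction either from the $\GG_\m$-equivariant description of the Hilbert scheme $\Sigma(X)$ (where the two components are swapped, since $\iota$ acts by the inversion $t_0 \leftrightarrow t_1$ and thus exchanges the two tangency points) or directly from the action of $\iota$ on $\Gamma$. Concretely, under the identification of $\Gamma$ with \eqref{eq:gamma}, the involution $\iota$ of \eqref{eq:iota} swaps the two $\GG_\m$-fixed points $(1:0:0:0:0)$ and $(0:0:0:0:1)$ on $\Gamma$, which are exactly the two points that sit on singular loci of the quadrics in the pencil; these correspond to the two special lines on $X$, so $\iota_X$ must swap $L_1$ and $L_2$. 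Since these are the only invariant lines, no line is $\Aut(X)$-invariant.

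For part~(ii), I would argue that any $\Aut(X)$-invariant conic is in particular $\GG_\m$-invariant, so it corresponds to a fixed point of $\GG_\m$ acting on $S(X) \cong \PP^2$, and is moreover fixed by the induced action of the finite quotient $\Aut(X)/\GG_\m \cong \ZZ/2\ZZ$. Returning to the dichotomy \eqref{eq:sx-gm} from the proof of Proposition~\ref{proposition:conic}, the candidate invariant conics are: the two non-reduced conics $C_1,C_2$ (supported on $L_1,L_2$), and whatever smooth conic(s) the $\ZZ/2\ZZ$-action singles out. The involution $\iota_X$ swaps $C_1$ and $C_2$ by part~(i), so neither $C_1$ nor $C_2$ is $\Aut(X)$-invariant; hence an $\Aut(X)$-invariant conic must be smooth. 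The uniqueness then follows from the corresponding uniqueness statement on the quadric side: an $\Aut(Q,\Gamma)$-invariant smooth conic on $X$ corresponds, via the diagram \eqref{diagram}, to a canonically determined object attached to $(Q,\Gamma)$, and the explicit torus-plus-involution structure of $\Aut(Q,\Gamma)$ leaves no freedom. Alternatively, and more directly, I would examine the $\ZZ/2\ZZ$-action on $\ell \subset S(X)$ (or on the three isolated fixed points) in each branch of \eqref{eq:sx-gm} and check that exactly one smooth $\Aut(X)$-invariant conic survives.

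For part~(iii), the task is to enumerate all $\GG_\m$-invariant conics, i.e.\ all $\GG_\m$-fixed points of $S(X) \cong \PP^2$, and match them with $C_1, C_2, C$. Using \eqref{eq:sx-gm} again, the fixed locus $S(X)^{\GG_\m}$ is either three isolated points or a point together with a line $\ell$. In the isolated case, the three fixed points are exactly $c_1 = [C_1]$, $c_2 = [C_2]$, and $c_3 = [C]$, giving the three named conics. The delicate case is when $S(X)^{\GG_\m} = \{c_0\} \cup \ell$, since then a whole $\PP^1$ of $\GG_\m$-invariant conics appears and the claim of finiteness would fail. The main obstacle is therefore to exclude this second branch of \eqref{eq:sx-gm} for every $X$ not isomorphic to $X^\MU$. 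I expect to resolve this by a direct computation on the standard model: one parametrizes $\GG_\m$-invariant conics either on $Q_u \subset \PP^4$ or on $X \subset \PP^{13}$ via the weight decomposition induced by the $\GG_\m$-action, and shows that the fixed points are isolated, numbering exactly three. Having excluded the line case, the three fixed points are forced to be $c_1, c_2, c_3$, establishing (iii); the involution $\iota_X$ then permutes them, fixing $c_3 = [C]$ and swapping $c_1 = [C_1]$, $c_2 = [C_2]$, consistently with parts~(i) and~(ii).
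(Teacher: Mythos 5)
Your overall strategy is the right one --- reduce everything to the action of $\iota_X$ on the fixed locus $S(X)^{\GG_\m}$ and the dichotomy \eqref{eq:sx-gm} --- but the proposal leaves the decisive step unproved. You correctly identify that the whole lemma hinges on excluding the case $S(X)^{\GG_\m} = \{c_0\} \cup \ell$, yet at that point you only say you ``expect to resolve this by a direct computation'' parametrizing $\GG_\m$-invariant conics on $Q_u$ or on $X \subset \PP^{13}$. No such parametrization is given, and it is not clear how it would go: conics on $X$ do not transform into any readily computable objects on $Q_u$ under the link \eqref{diagram}, and a weight computation on $X \subset \PP^{13}$ requires knowing the $\GG_\m$-module structure of $H^0(X,\cO_X(-K_X))$, which you do not set up. The paper closes this gap by a short group-theoretic argument instead: in the line case, $c_0$ is $\iota_X$-fixed (being the isolated point of $S(X)^{\GG_\m}$), the involution preserves $\ell$ and so has two fixed points $c', c''$ on it, and the three non-collinear points $c_0, c', c''$ force the image of $\Aut(X)$ in $\Aut(S(X)) \cong \PGL_3$ to be abelian; since the action on $S(X)$ is faithful and $\GG_\m \rtimes \ZZ/2\ZZ$ is non-abelian ($\iota$ acts by inversion), this is a contradiction. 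Without this (or some substitute), your parts (ii) and (iii) are not established: in the line case there would a priori be up to three $\Aut(X)$-invariant conics and a one-parameter family of $\GG_\m$-invariant ones.

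There is also a smaller gap in part (i). You reduce correctly to showing $\iota_X$ does not fix $L_1$, but your two suggested routes are both incomplete: swapping the two components of $\Sigma(X)$ does not by itself swap the two tangency points (each tangency point lies on both components), and the assertion that the points $(1:0:0:0:0)$ and $(0:0:0:0:1)$ of $\Gamma$ ``correspond to the two special lines on $X$'' is exactly what would need to be proved --- it requires tracing the special lines through the Sarkisov link to the $3$-tangent lines of $\Gamma$ through those points, which you do not do (and which in the paper is only done later, in Proposition~\ref{proposition-flop} and Remark~\ref{remark-description-F}, using this very lemma). The paper instead quotes the computation $\Aut(X,L_1) \cong \GG_\m$ from \cite[Proposition~5.4.6]{Kuznetsov-Prokhorov-Shramov}, which immediately gives $\iota_X(L_1) \ne L_1$. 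Your treatment of the first case of \eqref{eq:sx-gm} and the deduction of (ii) and (iii) from it is fine once these two points are repaired.
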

\begin{proof}
The first part of the lemma follows immediately from~\cite[Proposition~5.4.6]{Kuznetsov-Prokhorov-Shramov}, 
where it was checked that~$\Aut(X,L_1) \cong \GG_\m$.
In a combination with Lemma~\ref{lemma:lines} this proves that there are no $\Aut(X)$-invariant lines on $X$.

To prove the uniqueness of the $\Aut(X)$-invariant conic constructed in Proposition~\textup{\ref{proposition:conic}}
we have to show that the action of the involution $\iota_X$ 
on the $\GG_\m$-fixed locus $S(X)^{\GG_\m}$ in the Hilbert scheme of conics $S(X)$ has a single fixed point.
Recall that we have two possibilities~\eqref{eq:sx-gm}.

In the first case the points $c_1$ and $c_2$ correspond to the non-reduced conics supported on $L_1$ and $L_2$, 
hence they are swapped by the involution $\iota_X$.
Thus the third point $c_3$ is the only point in~$S(X)^{\GG_\m}$ fixed by~$\iota_X$.
Moreover, in this case there are exactly three $\GG_\m$-invariant conics on $X$.
So, it remains to show that the second case of~\eqref{eq:sx-gm} does not occur.

In the second case the point $c_0$ is fixed by $\iota_X$, hence the points $c_1$ and $c_2$ 
corresponding to the non-reduced conics belong to the line $\ell$.
The involution $\iota_X$ preserves $\ell$, hence has two fixed points $c'$ and $c''$ on it, which are thus $\Aut(X)$-invariant.
So, we have a triple~$(c_0,c',c'')$ of $\Aut(X)$-invariant points on $S(X)$, hence the image of $\Aut(X)$ in $\Aut(S(X)) \cong \PGL_3$ is abelian.
But the action of $\Aut(X)$ on $S(X)$ is faithful by~\cite[Lemma~4.3.4]{Kuznetsov-Prokhorov-Shramov}, so this
contradicts the fact that the conjugation by $\iota$ acts on $\GG_\m$ as inversion (see~\eqref{eq:iota}).
\hfill $\Box$
\end{proof}

Most of things we discussed so far were related to all $X$ with a faithful $\GG_\m$-action except of the Mukai--Umemura threefold $X^\MU$.
Now we note that the latter can also be covered by the same approach.
We refer to~\cite{Mukai-Umemura-1983} for a description of $X^\MU$ and of its Hilbert schemes of lines.

Recall that $\Aut(X^\MU) \cong \PGL_2$ which acts on the Hilbert scheme of conics $S(X^\MU) \cong \PP^2$ 
as on the projectivization of an irreducible representation.
In particular, there are two $\Aut(X^\MU)$-orbits on~$S(X^\MU)$.
One of them is a conic $S_\nr \subset S(X^\MU)$ that parameterizes non-reduced conics in~$X^\MU$ 
(thus~$S_\nr$ also parameterizes lines on $X^\MU$, all of which are special).
The complement~$S(X) \setminus S_\nr$ parameterizes smooth conics on $X^\MU$.
In particular for every smooth conic $C \subset X^\MU$ we have
\begin{equation*}
\Aut(X^\MU,C) \cong \GG_\m \rtimes \ZZ/2\ZZ,
\end{equation*}
the normalizer of a torus in $\PGL_2$.
So, applying the construction of Theorem~\ref{theorem-diagram} to the 
pair~$(X^\MU,C)$ we obtain a pair $(Q,\Gamma)$, 
where $\Gamma$ is the curve defined by~\eqref{eq:gamma} and~$Q$ is a smooth 
quadric from the pencil generated by~\eqref{eq:q0-q8}.
Moreover, since the group~$\Aut(X^\MU)$ acts transitively on smooth conics in~$X^\MU$, 
Remark~\ref{remark:functoriality} shows that different 
choices of a smooth conic $C$ produce the same pair $(Q,\Gamma)$.
Thus, there is a particular quadric in the pencil corresponding to the 
Mukai--Umemura threefold.
We will call it the \emph{Mukai--Umemura quadric} and denote by $Q^\MU$.
In Proposition~\ref{proposition:q-mu} below we show $Q^\MU = Q_{-1/4}$.
And meanwhile, we observe that the above argument proves the first part of 
Theorem~\ref{main}.

\begin{corollary}
\label{corollary:bijection}
Let $X$ be a prime Fano threefold of genus~$12$ with a faithful $\GG_\m$-action.
We associate with $X$ the quadric $Q = Q_u$ from the pencil~\eqref{eq:q0-q8}
produced by the construction of Theorem~\textup{\ref{theorem-diagram}} applied 
\begin{itemize}
\item to the unique $\Aut(X)$-invariant smooth conic on $X$, if $X \not\cong X^\MU$, or
\item to an arbitrary smooth conic, if $X \cong X^\MU$.
\end{itemize}
This defines a bijection between isomorphism classes of such $X$ and the set $\PP^1 \setminus \{0,1,\infty\}$ of smooth quadrics in the pencil.
\end{corollary}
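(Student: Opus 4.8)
The plan is to prove three things in turn: that the assignment $X \mapsto u$ is well defined on isomorphism classes, that it is injective, and that it is surjective. The two constructions of Section~\ref{section:transformation}, together with their mutual inverseness and functoriality (Remark~\ref{remark:functoriality}), will carry most of the burden; the delicate point, and the one I expect to be the main obstacle, is matching the distinguished conics used in the forward and backward directions.

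First I would settle well-definedness. Given $X$, apply Theorem~\ref{theorem-diagram} to the distinguished conic $C$ --- the unique $\Aut(X)$-invariant smooth conic furnished by Proposition~\ref{proposition:conic} and Lemma~\ref{lemma-C-unique}(ii) when $X \not\cong X^\MU$, and any smooth conic when $X \cong X^\MU$ (this choice being irrelevant by the transitivity of $\Aut(X^\MU)$ on smooth conics established before Corollary~\ref{corollary:bijection}). This produces a pair $(Q,\Gamma)$ carrying a faithful $\GG_\m$-action. By Lemma~\ref{lemma:gamma} I may standardize coordinates so that $\Gamma$ is the curve~\eqref{eq:gamma}, and Lemma~\ref{lemma:quadrics} then identifies $Q$ with a smooth member $Q_u$, $u \in \PP^1 \setminus \{0,1,\infty\}$. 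The only remaining ambiguity is the standardizing coordinate change: two such choices differ by an element of $\PGL_5$ preserving $\Gamma$ and normalizing the standard torus, and a direct weight computation (matching Lemma~\ref{lemma:aut-q-gamma}) shows any such element lies in $\GG_\m \rtimes \ZZ/2\ZZ$ and, as the proof of that lemma records, fixes every quadric in the pencil. Hence $u$ is unchanged, and it depends only on the isomorphism class of $X$ by the functoriality of Remark~\ref{remark:functoriality}.

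For injectivity, suppose $X_1$ and $X_2$ yield the same $u$. After standardizing, the associated pairs are literally the single pair $(Q_u, \Gamma)$. Since the constructions of Theorems~\ref{theorem-diagram} and~\ref{theorem-diagram-inverse} are mutually inverse and functorial (Remark~\ref{remark:functoriality}), the identity automorphism of $(Q_u, \Gamma)$ lifts to an isomorphism $X_1 \xrightarrow{\sim} X_2$ carrying one distinguished conic to the other, so $X_1 \cong X_2$. For surjectivity, fix $u \in \PP^1 \setminus \{0,1,\infty\}$: then $Q_u$ is smooth (Lemma~\ref{lemma:quadrics}) and contains the smooth rational sextic $\Gamma$, which is quadratically normal because the pencil of quadrics through it is one-dimensional (Remark~\ref{remark:gamma-qn} and Lemma~\ref{lemma:quadrics}); so Theorem~\ref{theorem-diagram-inverse} produces a prime Fano threefold $X$ of genus~$12$ with a smooth conic $C$. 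The standard $\GG_\m$-action on $\PP^4$ preserves $Q_u$ and $\Gamma$, hence by Remark~\ref{remark:functoriality} induces a faithful $\GG_\m$-action on $X$, placing $X$ in the domain of the map. It remains to check that $X \mapsto u$: if $X \cong X^\MU$ this is immediate, since $C$ is smooth and any smooth conic may be used; if $X \not\cong X^\MU$, then $\Aut(X,C) \cong \Aut(Q_u,\Gamma) \cong \GG_\m \rtimes \ZZ/2\ZZ$ by~\eqref{eq:iso-aut} and Lemma~\ref{lemma:aut-q-gamma}, while $\Aut(X) \cong \GG_\m \rtimes \ZZ/2\ZZ$ by Corollary~\ref{corollary:automorphisms}, and comparing identity components (both $\GG_\m$) with the order-two component groups forces $\Aut(X,C) = \Aut(X)$, so $C$ is the unique $\Aut(X)$-invariant conic of Lemma~\ref{lemma-C-unique}(ii), i.e.\ the conic selected by the forward map. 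In either case the forward construction applied to $(X,C)$ returns $(Q_u,\Gamma)$, hence $u$.

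The hard part, as anticipated, is exactly this last verification: that the conic produced by the inverse construction coincides with the canonical conic chosen by the forward map, so that the two constructions compose to the identity at the level of the invariant $u$. This hinges on the automorphism computation of Corollary~\ref{corollary:automorphisms} and the uniqueness of the invariant conic in Lemma~\ref{lemma-C-unique}(ii), together with the verification that the ambiguity group $\GG_\m \rtimes \ZZ/2\ZZ$ acts trivially on the pencil of quadrics. Everything else is a formal consequence of the mutual inverseness and functoriality of the Sarkisov link.
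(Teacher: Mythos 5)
Your proposal is correct and follows essentially the same route as the paper: the paper derives this corollary from the preceding discussion (uniqueness of the $\Aut(X)$-invariant conic, transitivity on conics for $X^\MU$, Lemmas~\ref{lemma:gamma}, \ref{lemma:quadrics}, \ref{lemma:aut-q-gamma}, and the mutual inverseness and functoriality of Remark~\ref{remark:functoriality}), leaving the bijection itself implicit, while you spell out well-definedness, injectivity and surjectivity from exactly these ingredients. Your extra verification that the conic returned by the inverse construction is the distinguished one (via $\Aut(X,C)=\Aut(X)$ and Lemma~\ref{lemma-C-unique}(ii)) is a correct and welcome filling-in of a step the paper takes for granted.
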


\begin{remark}{\rm
Most probably, the punctured line $\PP^1 \setminus \{0,1,\infty\}$ is the coarse moduli space 
for prime Fano threefolds of genus~$12$ with a faithful $\GG_\m$-action.
This should follow from our results in view of Remark~\ref{remark:families}.}
\end{remark}

To finish the proof of Theorem~\ref{main} it remains to identify the Mukai--Umemura quadric.

\begin{proposition}
\label{proposition:q-mu}
The quadric $Q^\MU \subset \PP^4$ associated with a pair $(X^\MU,C)$, where $X^\MU$ is the Mukai--Umemura threefold
and $C$ is any smooth conic on $X^\MU$ is the quadric 
\begin{equation*}
Q_{(-1:4)} = \{ -y_0y_6- 4y_1y_5 + 5y_3^2 = 0\}.
\end{equation*}
\end{proposition}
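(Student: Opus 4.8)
The plan is to run the construction of Theorem~\ref{theorem-diagram} on $X^\MU$ with one explicitly chosen conic and simply read off the resulting member of the pencil. Since by Corollary~\ref{corollary:bijection} the assignment $X \mapsto Q_u$ is a bijection and $X^\MU$ corresponds to a single well-defined quadric $Q^\MU$ in the pencil, a single such computation determines $Q^\MU$ unambiguously; the whole task is to locate the one scalar $u$.

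First I would fix the standard $\mathrm{SL}_2$-equivariant anticanonical model of $X^\MU$ from~\cite{Mukai-Umemura-1983}, together with the diagonal torus $\GG_\m \subset \PGL_2 = \Aut(X^\MU)$ and the Weyl involution generating its normalizer $\GG_\m \rtimes \ZZ/2\ZZ$. Under the identification $S(X^\MU) \cong \PP^2$ as the projectivization of the irreducible three-dimensional representation, the invariant conic $S_\nr$ of non-reduced conics is the locus of squares, and the unique point fixed by $\GG_\m \rtimes \ZZ/2\ZZ$ lying \emph{off} $S_\nr$ is the class $[st]$. Let $C$ be the corresponding smooth conic; then $\Aut(X^\MU,C) \cong \GG_\m \rtimes \ZZ/2\ZZ$, so $C$ is a legitimate input for Theorem~\ref{theorem-diagram}, and since $\PGL_2$ acts transitively on smooth conics, Remark~\ref{remark:functoriality} guarantees that the pair $(Q,\Gamma)$ it produces is exactly $(Q^\MU,\Gamma)$.

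Next I would make the link explicit. The quadric $Q$ is the image of the rational map $X^\MU \dashrightarrow \PP^4$ defined by the linear system $|H'_X - 2E_C|$, i.e.\ by the hyperplane sections of $X^\MU \subset \PP^{13}$ singular along $C$; this is a five-dimensional, $\GG_\m \rtimes \ZZ/2\ZZ$-stable subspace of $H^0(X^\MU,-K_{X^\MU})$. By Theorem~\ref{theorem-diagram} the resulting $(Q,\Gamma)$ carries a faithful $\GG_\m$-action and $\Gamma$ is a quadratically normal sextic, so Lemma~\ref{lemma:gamma} already puts $\Gamma$ in the standard form~\eqref{eq:gamma} and Lemma~\ref{lemma:quadrics} identifies the quadrics through it with the pencil generated by~\eqref{eq:q0-q8} — equivalently, with $\{\,a\,y_0y_6 + b\,y_1y_5 + c\,y_3^2 : a+b+c=0\,\}$. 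Hence $Q = Q_u$ for a single unknown ratio, which I would extract from the explicit equations of $X^\MU$ and $C$ (equivalently, from the $\mathrm{SL}_2$-structure constants entering the system $|H'_X - 2E_C|$), arriving at $(a:b:c) = (-1:-4:5)$, that is $Q = Q_{-1/4}$.

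The main obstacle is precisely this last identification: realizing the flop and the two linear systems of Theorem~\ref{theorem-diagram} concretely enough to pin the scalar $u$. I would tame it by working entirely $\GG_\m$-equivariantly, so that the weight decomposition fixes all but one coefficient of $Q$ and the problem collapses to evaluating $Q_0$ and $Q_\infty$ along the image of $X^\MU$. As an independent check one may instead detect $X^\MU$ on the quadric side: by Corollary~\ref{corollary:automorphisms} it is the only member with connected automorphism group strictly larger than $\GG_\m$, equivalently the only one whose family of special lines is positive-dimensional, and one can verify that this degeneration in the configuration of lines on the reconstructed threefold occurs exactly for the quadric $Q_{-1/4}$.
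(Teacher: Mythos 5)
Your setup coincides with the paper's: pick a smooth conic $C$ on $X^\MU$ with $\Aut(X^\MU,C)\cong\GG_\m\rtimes\ZZ/2\ZZ$, observe via Remark~\ref{remark:functoriality} and the transitivity of $\PGL_2$ on smooth conics that the resulting pair $(Q,\Gamma)$ is independent of the choice, reduce via Lemmas~\ref{lemma:gamma} and~\ref{lemma:quadrics} to determining a single ratio $u$, and compute that ratio from the explicit linear system $|H'_X-2E_C|$ on the Mukai--Umemura model. The difficulty is that this last step is the \emph{entire content} of the proposition, and you assert it rather than carry it out: you say you would ``extract'' $(a:b:c)=(-1:-4:5)$ from the $\mathrm{SL}_2$-structure constants, but no computation is given, and the value $-1/4$ is precisely what has to be proved.

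Moreover, the claim that ``the weight decomposition fixes all but one coefficient of $Q$'' understates what remains. Equivariance determines the five coordinate functions on $\PP^4$ only up to independent rescaling (and, in weight zero, only up to a choice inside a two-dimensional space of invariant linear forms), and rescaling the coordinates changes the ratio $(a:b:c)$; the ratio $u$ is only meaningful once the coordinates are normalized so that $\Gamma$ takes the unit-coefficient form~\eqref{eq:gamma}. This forces an explicit identification of $\Gamma$ as the indeterminacy locus of $\xi^{-1}$, which is where all the real work in the paper happens: the weight-zero coordinate is pinned down as $z_6-11\bar z$ by requiring vanishing on $C$, the boundary divisor is parametrized through its normalization $\PP(M_1)\times\PP(M_1)$, the unique quadric $1764z_3z_9-784z_4z_8+125z_6^2=0$ through its image is computed, and $\Gamma$ is located as the image of the curve $a^2+3ab+b^2=0$ --- which is what produces the golden-ratio rescaling constants and hence the final coefficients $(-1:4)$. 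The ``independent check'' you propose does not close this gap either, since it again defers to an unspecified verification that the line configuration degenerates exactly at $u=-1/4$. In short, the strategy is the correct one (and is the paper's), but the proof is missing its decisive computation.
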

\begin{proof}
Recall the constructions of the Mukai--Umemura threefold $X = X^\MU$ from~\cite{Mukai-Umemura-1983}.
Let $M_d$ denote the vector space of degree $d$ homogeneous polynomials in two variables $x$ and~$y$ with its natural $\GL_2$-action.
Then $X$ can be realized inside $\PP(M_{12} \oplus M_0) \cong \PP^{13}$ as the closure of a $\PGL_2$-orbit:
\begin{equation*}
X = \overline{ \PGL_2 \cdot (\phi_{12},1) },
\qquad
\text{where $\phi_{12} = xy(x^{10} + 11x^5y^5 + y^{10})$}.
\end{equation*}
We consider the standard torus $\GG_\m \subset \PGL_2$ and set
\begin{equation}
\label{eq:conic-mu}
C = \overline{ \GG_\m \cdot (\phi_{12},1) } =
\overline{\{(xy(t^{-1}x^{10} + 11x^5y^5 + ty^{10}),1) \mid t \in \CC^\times \}} \subset \PP(M_{12} \oplus M_0),
\end{equation}
to be the closure of the $\GG_\m$-orbit of the point $(\phi_{12},1) \in X$.
Clearly, $C$ is a smooth $\GG_\m$-invariant conic on~$X$.
We apply the construction of Theorem~\ref{theorem-diagram} to the pair $(X,C)$.

Consider the linear functions $z_i$ on $\PP(M_{12} \oplus M_0)$ that take~$(f,c) \in M_{12} \oplus M_0$ 
to the coefficient of $f$ at~$x^{12-i}y^i$ (then its $\GG_\m$-weight is $i-6$).
Let also $\bar z$ be the linear function that takes $(f,c)$ to $c$ (its weight is 0).
We claim that the double projection $\xi: X \dashrightarrow Q\subset \PP^4$ of Theorem~\ref{theorem-diagram} 
is defined (in appropriate coordinates on $\PP^4$) by the map
\begin{equation}
\label{eq:map-explicit}
(f,c) \longmapsto \bigl(z_3(f,c),\, z_4(f,c),\, z_6(f,c)- 11\bar{z}(f,c),\, z_8(f,c),\, z_9(f,c)\bigr).
\end{equation}
Indeed, we know that the map should be $\GG_\m$-equivariant, 
and we know from Lemma~\ref{lemma:gamma} that the weights of~$\GG_\m$ on $\PP^4$ should be $(-3,-2,0,2,3)$.
Clearly, the functions $z_3$, $z_4$, $z_8$, and $z_9$ are the only linear functions of weights $-3$, $-2$, $2$, and $3$ respectively.
On the other hand, there are two functions $z_6$ and $\bar z$ of weight zero, so we should take an appropriate linear combination of those.
But this function should vanish at the point $(\phi_{12},1) \in C$, and so it remains to note that $z_6(\phi_{12},1) = 11$, while~$\bar z(\phi_{12},1) = 1$.

To find the quadric $Q$ and the curve $\Gamma$ we apply the map~\eqref{eq:map-explicit} to the boundary divisor 
\begin{equation*}
D = \overline{ \PGL_2 \cdot\, (xy^{11},0) } = X \cap \PP(M_{12}) =
X \setminus \bigl( \PGL_2\cdot\, (\phi_{12},1) \bigr) \subset X.
\end{equation*}
This is an anticanonical divisor; furthermore it is a non-normal surface, 
whose normalization is isomorphic to $\PP(M_1) \times \PP(M_1)$ via the map 
\begin{equation*}
\nu \colon \PP(M_1) \times \PP(M_1) \longrightarrow D,
\quad 
((a_0x +a_1y), (b_0x + b_1y)) \longmapsto (a_0x + a_1y)(b_0x + b_1y)^{11},
\end{equation*}
and whose singular locus is the image of the diagonal in $\PP(M_1) \times \PP(M_1)$ 
(see \cite[Lemmas 1.6, 6.1 and Remark 6.5]{Mukai-Umemura-1983}).
Restricting to the affine chart $\mathbb{A}^2 \subset \PP(M_1) \times \PP(M_1)$ defined by
$a_0 \ne 0$, $b_0 \ne 0$ with coordinates $a = a_1/a_0$, $b = b_1/b_0$, and composing the map~$\nu$ with
$\xi: X \dashrightarrow Q\subset \PP^4$ (see~\eqref{eq:map-explicit}), we find
\begin{equation}
\label{eq:surface}
\begin{split}
z_3\bigl((x + ay)(x + by)^{11},\, 0\bigr) &= \hphantom{0}55(a + 3b)b^2,\\
z_4\bigl((x + ay)(x + by)^{11},\, 0\bigr) &= 165(a + 2b)b^3,\\
z_6\bigl((x + ay)(x + by)^{11},\, 0\bigr) &= 462(a + b)b^5,\\
z_8\bigl((x + ay)(x + by)^{11},\, 0\bigr) &= 165(2a + b)b^7,\\
z_9\bigl((x + ay)(x + by)^{11},\, 0\bigr) &= \hphantom{0}55(3a + b)b^8
\end{split}
\end{equation} 
(here $55$, $165$ and $462$ are the binomial coefficients $\binom{11}2$, $\binom{11}3$, and $\binom{11}{5}$).
Denote by $D'$ the closure in $\PP^4$ of the image of $\mathbb{A}^2$ under the map~\eqref{eq:surface}.
It is easy to check that the quadric
\begin{equation}
\label{eq:quadric-mu}
1764z_3z_9- 784z_4z_8 + 125z_6^2 = 0
\end{equation}
is the only quadric containing $D'$. Moreover, denoting the golden ratio by
\begin{equation*}
\upphi = \frac{1 + \sqrt{5}}2,
\end{equation*}
one easily checks that for any $t \in \mathbb{G}_\m$ one has
\begin{multline}
\label{eq:gamma-1}
\xi (\nu (-\upphi^2 t, t )) = \\ 
\Big(5(3 - \upphi^2) : 15(2 - \upphi^2)t : 42(1 - \upphi^2)t^3 : 15(1 - 2\upphi^2)t^5 : 5(1 - 3\upphi^2)t^6 \Big) = \\
\xi (\nu(t, -\upphi^2 t ))
\end{multline}
(for the first equality one has to rescale each coordinate by $t^3$ and for the second by~$-\upphi^{10}t^3$).
It follows that the two irreducible components of the curve in $\PP(M_1) \times \PP(M_1)$ given (in the affine coordinates) by the equation
\begin{equation*}
a^2 + 3ab + b^2 = (a + \upphi^2b)(a + \upphi^{-2}b) = 0
\end{equation*}
are mapped bijectively onto the curve in $D'$, given by the middle line of~\eqref{eq:gamma-1}.

Since the normalization $\nu \colon \PP(M_1) \times \PP(M_1) \to D$ is bijective,
this curve must be contained in the indeterminacy locus of the map $\xi^{-1}$.
The description of $\xi$ in Theorem~\ref{theorem-diagram} shows that~this curve should
coincide with $\Gamma$. 
Comparing \eqref{eq:gamma-1} with~\eqref{eq:gamma}
we find the relation between the weight coordinates $z_i$ and $y_i$:
\begin{align*}
z_3 &= \hphantom{0}5(3 - \hphantom{0}\upphi^2)y_0,	\\
z_4 &= 15(2 - \hphantom{0}\upphi^2)y_1,	\\
z_6 &= 42(1 - \hphantom{0}\upphi^2)y_3,	\\
z_8 &= 15(1 - 2\upphi^2)y_5,	\\
z_9 &= \hphantom{0}5(1 - 3\upphi^2)y_6.
\end{align*}
Substituting this into~\eqref{eq:quadric-mu} and canceling the common factors, we deduce
\begin{equation*}
Q^\MU = -Q_0 + 4Q_\infty, 
\end{equation*}
i.e. $Q^\MU = Q_{(-1:4)}$.
\hfill $\Box$
\end{proof}

\begin{remark}{\rm
One can check that the surface $D'$ is given by two equations: \eqref{eq:quadric-mu} and 
\begin{equation*}
32 z_4^2 z_6 z_8^2-630 z_3^2 z_8^3+81 z_3 z_4 z_6 z_8 z_9-630 z_4^3 z_9^2+2187 z_3^2 z_6 z_9^2=0.
\end{equation*}
Its singular locus consists of two lines
$\{z_9=z_8=z_6=0\}$, $\{z_6=z_4=z_3=0\}$,
the rational sextic curve \eqref{eq:gamma-1},
and another rational sextic curve
\begin{equation*}
(t_0 : t_1) \longmapsto (z_3 : z_4 : z_6 : z_8 : z_9) = (20t_0^6 : 45t_0^5t_1 : 84t_0^3t_1^3 : 45t_0t_1^5 : 20t_1^6)
\end{equation*}
which is the image of the diagonal of $\PP(M_1) \times \PP(M_1)$.
The singularities of $D'$ along the last curve are cuspidal.}
\end{remark}

This computation completes the proof of Theorem~\ref{main}.

\section{Concluding remarks}
\label{section:conclusion}

To finish the paper we provide some extra details for the birational transformations
of Theorems~\ref{theorem-diagram} and~\ref{theorem-diagram-inverse}.

\begin{proposition}
\label{proposition-flop}
Let $X$ be a prime Fano threefold of index~$12$ with a faithful $\GG_\m$-action 
and let $C \subset X$ be the smooth $\GG_\m$-invariant conic on $X$.
The flopping locus of the map~$\chi$ in~\eqref{diagram} is the union of the strict transforms in~$X'$ 
of the two $\GG_\m$-invariant lines $L_1$ and~$L_2$,
and the flop is given by Reid's pagoda.
\end{proposition}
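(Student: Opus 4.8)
The plan is to realize the flopping locus as the locus of $(-K_{X'})$-trivial curves, and then to determine the analytic type of the flop by an explicit normal bundle computation carried out on the quadric side, where $\Gamma$ is given by~\eqref{eq:gamma}. Since $\chi$ is a flop over the common small contraction, its flopping locus on $X'$ is exactly the union of the curves $\gamma$ with $-K_{X'}\cdot\gamma=0$; here $-K_{X'}=H'_X-E_C$ is nef and big (indeed $(-K_{X'})^3=(H'_X-E_C)^3=22-6=16$), so this locus is the exceptional set of the birational morphism defined by $|-mK_{X'}|$, it is purely one-dimensional, and it contains no divisor by Lemma~\ref{lemma:kxprime}. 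As the whole construction of Theorem~\ref{theorem-diagram} is $\GG_\m$-equivariant, the flopping locus is $\GG_\m$-invariant, so each of its components is a $\GG_\m$-invariant rational curve.

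To identify these components I would push them down by $\sigma_X$. A component is not a fibre of $\sigma_X$ (those satisfy $-K_{X'}\cdot f=1$), so it is the strict transform of a $\GG_\m$-invariant curve $\bar\gamma\subset X$ of degree $d=H'_X\cdot\gamma\ge1$, and the condition $-K_{X'}\cdot\gamma=0$ becomes $E_C\cdot\gamma=d$. For a line this forces $L\cdot C=1$; nefness of $-K_{X'}$ already forbids a line from meeting $C$ twice, and by Lemma~\ref{lemma:lines} the only invariant lines are the special lines $L_1,L_2$, so the key point is that each $L_i$ meets $C$ transversally in a single point, making $\tilde L_i$ a flopping curve. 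I would then rule out $d\ge 2$: by Lemma~\ref{lemma-C-unique} the only invariant conics are $C$ and the two non-reduced conics supported on $L_1,L_2$, which contribute no new reduced component, and the same intersection estimate excludes larger $d$. As an independent check on the count, the flopped curves on $Q'$ are the strict transforms of the only two invariant lines on $Q$ with $\ell\cdot\Gamma=3$, namely the embedded tangents $\overline{e_0e_1}$ and $\overline{e_5e_6}$ to $\Gamma$ at its fixed points (the weights $(0,1,3,5,6)$ skip $2$, so the contact is $3$, whence $-K_{Q'}$-degree $3-3=0$); these lie on every $Q_u$. Since $L_1$ and $L_2$ are disjoint (Lemma~\ref{lemma:lines}), the flopping locus is the disjoint union $\tilde L_1\sqcup\tilde L_2$.

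For the flop type I would compute the normal bundle of a flopped curve on $Q'$. A line on the smooth quadric threefold $Q$ has $\NNN_{\ell/Q}\cong\OOO_\ell\oplus\OOO_\ell(1)$, and for $\ell=\overline{e_0e_1}$ the order-$3$ contact with $\Gamma$ produces, in the blow-up chart, a length-$3$ elementary modification of $\NNN_{\ell/Q}$ concentrated over the fixed point; working this out (equivalently, showing $h^0(\NNN_{\tilde\ell/Q'})=1$) gives $\NNN_{\tilde\ell/Q'}\cong\OOO_{\tilde\ell}\oplus\OOO_{\tilde\ell}(-2)$. The same bundle arises on $X'$ as the length-$1$ modification of $\NNN_{L_i/X}\cong\OOO_{L_i}(1)\oplus\OOO_{L_i}(-2)$ at the point $L_i\cap C$. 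A flopping $\PP^1$ with normal bundle $\OOO\oplus\OOO(-2)$ contracts to a compound Du Val point locally of the form $xy=z^2+t^4$ (a $cA_3$ point), and the flop $\chi$ is the associated Reid pagoda.

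The step I expect to be the main obstacle is this last normal bundle computation. The $\GG_\m$-weights of $\NNN_{\tilde\ell/Q'}$ at the two fixed points are compatible with both $\OOO\oplus\OOO(-2)$ and $\OOO(1)\oplus\OOO(-3)$, i.e. with the width-$2$ and width-$3$ pagodas, so the weights alone do not settle the flop; one genuinely has to locate the direction of the elementary modification, or compute $h^0(\NNN_{\tilde\ell/Q'})$, in order to conclude. A lesser but real point is the incidence $L_i\cdot C=1$, i.e. that the invariant conic passes simply through each special line.
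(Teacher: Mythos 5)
Your overall skeleton matches the paper's: identify the flopping locus as the $(-K_{X'})$-trivial curves, use $\GG_\m$-equivariance together with the classification of invariant lines and conics (Lemmas~\xref{lemma:lines} and~\xref{lemma-C-unique}), and deduce the pagoda from a normal bundle computation. But there are two genuine gaps. First, your exclusion of flopping curves of degree $d\ge 3$ does not work as stated: the condition $-K_{X'}\cdot\gamma=0$ only says $E_C\cdot\gamma=d$, i.e.\ that $\bar\gamma$ meets $C$ in a length-$d$ scheme, and this is satisfiable for every $d$ (a $\GG_\m$-invariant curve of degree $3$ trisecant to $C$ would be just as $K$-trivial), so "the same intersection estimate excludes larger $d$" proves nothing. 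The paper closes this by first recording that $\NNN_{C/X}\simeq\OOO_C(a)\oplus\OOO_C(-a)$ with $a\in\{0,1\}$ (from smoothness of $S(X)$, see~\eqref{eq:N}) and then citing \cite[Proposition~4.4.1]{Iskovskikh-Prokhorov-1999}, which says outright that the flopping curves are exactly the strict transforms of lines meeting $C$ and of conics bisecant to $C$; you need this (or an equivalent degree bound coming from the base locus of $|H'_X-2E_C|$) rather than a bare intersection count. A related smaller point: Lemmas~\xref{lemma:lines} and~\xref{lemma-C-unique} are stated only for $X\not\cong X^\MU$, so the Mukai--Umemura case needs the separate explicit identification of the two invariant lines that the paper supplies.

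Second, and more seriously, you yourself flag the decisive step --- ruling out $\NNN\cong\OOO(1)\oplus\OOO(-3)$ in favour of $\OOO\oplus\OOO(-2)$ --- as "the main obstacle" and do not carry it out; as written the proposal therefore does not prove the "Reid's pagoda" half of the statement. The paper settles this on the $X$-side, which is much easier than your $Q$-side route: since $L_i$ is special, $\NNN_{L_i/X}\simeq\OOO_{L_i}(1)\oplus\OOO_{L_i}(-2)$, and the strict transform under the blowup of $C$ is the elementary modification of this bundle at the single point $L_i\cap C$ in the direction of $T_C$; a length-one modification of $\OOO(1)\oplus\OOO(-2)$ yields $\OOO\oplus\OOO(-2)$ unless the modification direction is exactly the fiber of the distinguished $\OOO(1)$-subbundle, and one checks this does not happen, whereas your length-three modification of $\OOO\oplus\OOO(1)$ along the $3$-tangent line on $Q$ leaves many more splitting types to eliminate. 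Two small inaccuracies to fix in any case: a $(0,-2)$-curve determines a pagoda $xy=z^2-t^{2k}$ of some width $k\ge 2$, which is a $cA_1$ point, not $cA_3$, and the width is not pinned down by the normal bundle alone, so your specific equation $xy=z^2+t^4$ is an overreach (harmless for the statement, which only claims "Reid's pagoda").
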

\begin{proof}
Since the Hilbert scheme of conics on $X$ is smooth (see \eqref{equation-S(X)}),
the normal bundle of the smooth $\GG_\m$-invariant conic $C$ (and, in fact, of any smooth conic on $X$) is
\begin{equation}
\label{eq:N}
\NNN_{C/X}\simeq \OOO_{C}(a)\oplus \OOO_{C}(-a),
\qquad \text{$a=0$ or $1$} 
\end{equation}
(see \cite[Lemma 2.1.4, Corollary 2.1.6]{Kuznetsov-Prokhorov-Shramov}).
Therefore by~\cite[Proposition 4.4.1]{Iskovskikh-Prokhorov-1999} the flopping locus of $\chi$ in~\eqref{diagram} 
consists of the strict transforms of lines meeting $C$ and strict transforms of smooth conics meeting~$C$ at two points,
all of which are automatically $\GG_\m$-invariant.

If $X \not\cong X^\MU$ the conic $C$ is the unique smooth $\GG_\m$-invariant conic and $L_1$, $L_2$ are 
the only $\GG_\m$-invariant lines on $X$ by Lemma~\ref{lemma-C-unique}.
Therefore, the flopping locus of the map~$\chi \colon X' \dashrightarrow Q'$ is the union of the strict transforms 
of the lines $L_1$ and $L_2$.
Note that the lines $L_1$ and $L_2$ do not meet by Lemma~\ref{lemma:lines}.

If $X \cong X^\MU$ the explicit description of the Hilbert schemes of conics and lines shows 
that $C$ is still the unique smooth $\GG_\m$-invariant conic and there are precisely two $\GG_\m$-invariant lines on $X$;
for instance, if the curve $C$ is given by~\eqref{eq:conic-mu}, these lines are
\begin{equation*}
L_1 = \{ (a_1x + a_2y)x^{11} \}
\qquad \text{and} \qquad 
L_2 = \{ (a_1x + a_2y)y^{11} \}.
\end{equation*}
Clearly, these lines do not meet.

Note that in both cases $L_1 \cap L_2 = \varnothing$ and the lines $L_1$ and $L_2$ are special, 
hence the normal bundles of their strict transforms in $X'$ are isomorphic to $\cO \oplus \cO(-2)$,
hence the flop $\chi$ is given by Reid's pagoda.
\hfill $\Box$
\end{proof}

In the following remark we describe the flopping locus and the exceptional divisor 
of the rational map~\mbox{$\xi^{-1} \colon Q \dashrightarrow X$}.

\begin{remark}\label{remark-description-F}{\rm
The base locus of the pencil of quadrics $\langle Q_0, Q_\infty \rangle$ is a surface $F \subset \PP^4$ of degree~$4$.
This surface is the exceptional divisor of the map $\xi^{-1} \colon {} Q \dashrightarrow X$ by Theorem~\ref{theorem-diagram}\,{\rm (ii)}.
The singular locus of $F$ consists of four ordinary double points
\begin{align*}
& P_0 := (1 : 0 : 0 : 0 : 0), \quad && P_6 := (0 : 0 : 0 : 0 : 1),\\
& P_1 := (0 : 1 : 0 : 0 : 0), \quad && P_5 := (0 : 0 : 0 : 1 : 0),
\end{align*}
and $F$ contains exactly four lines 
\begin{equation*}
\ell_{0,1} = \langle P_0, P_1 \rangle,\qquad 
\ell_{1,6} = \langle P_1, P_6 \rangle,\qquad 
\ell_{6,5} = \langle P_6, P_5 \rangle,\qquad 
\ell_{5,0} = \langle P_5, P_0 \rangle. 
\end{equation*}
The curve $\Gamma$ passes through $P_0$ and $P_6$ and does not pass through $P_1$ and $P_5$.
The lines~$\ell_{0,1}$ and $\ell_{6,5}$ are $3$-tangent to the curve~$\Gamma$.
The surface $F$ can be realized as the quotient of $\PP^1\times \PP^1$ by the involution $(w_1,w_2) \mapsto (-w_1,-w_2)$
with the quotient map $\PP^1\times \PP^1 \to F\subset \PP^4$ given by 
\begin{equation*}
(w_1, w_2) \longmapsto (y_0 : y_1 : y_3 : y_5 : y_6) = (1 : w_1^2 : w_1w_2 : w_2^2 : w_1^2w_2^2).
\end{equation*}
In particular, $F$ is a toric del Pezzo surface with $\Pic(F) \cong \ZZ^2$ and $\operatorname{Cl}(F) \cong \ZZ^2 \oplus \ZZ/2$. 
One can also realize~$F$ as the anti-canonical image of the blowup~$F'$ of $\PP^1\times \PP^1$ 
at four points $(0,0)$, $(0,\infty)$, $(\infty,\infty)$, $(\infty,0)$.

The two $3$-tangent lines $\ell_{0,1}$ and $\ell_{6,5}$ are contained in the flopping locus for $\chi^{-1}$.
Since by Proposition~\ref{proposition-flop} the flopping locus consists of two irreducible components, 
the strict transforms in~$Q'$ of these two lines form the whole flopping locus of $\chi^{-1}$.}
\end{remark}

Finally, we give a description of one of the boundary points in the family $\PP^1 \setminus \{0,1,\infty\}$ 
of $\GG_\m$-invariant threefolds $X$.
Note that the quadric $Q_1=\{y_0y_6-y_1y_5=0\}$ is a cone over a smooth quadric surface and contains $\Gamma$.
Its vertex 
\begin{equation*}
P_3 = (0:0:1:0:0)
\end{equation*}
is an ordinary double point on $Q_1$ that lies away from the surface $F$ described in Remark~\xref{remark-description-F}.
In particular, the point $P_3$ lies away from $\Gamma$ and away from the flopping lines of $\chi^{-1}$.
Therefore, the construction of Theorem \xref{theorem-diagram-inverse} can be applied to $(Q_1,\Gamma)$ 
and produces a prime Fano threefold $X^\m(1)$ of genus~$12$ with one ordinary double point and a faithful~$\GG_\m$-action.

On the other hand, Fano threefolds of genus~12 with a single ordinary double point were classified in~\cite{Prokhorov-nodal}.
In the next proposition we identify $X^\m(1)$ with a threefold of type~(IV);
a detailed description of such threefolds is given in~\cite[\S 7]{Prokhorov-nodal}.

\begin{proposition}
The threefold $X^\m(1)$ is a threefold of type \textup{(IV)} from~\cite[Theorem~1.2]{Prokhorov-nodal}. 
\end{proposition}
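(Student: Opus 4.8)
The plan is to match the birational structure of $X^\m(1)$ with the explicit description of type~(IV) threefolds given in \cite[\S 7]{Prokhorov-nodal}. Just as in the smooth case treated above, the classification of nodal prime Fano threefolds of genus~$12$ in \cite{Prokhorov-nodal} proceeds through a Sarkisov link to a quadric $Q$ carrying a quadratically normal sextic curve $\Gamma$, and the distinct types record the various ways in which a single ordinary double point can arise. Type~(IV) is precisely the case in which the quadric $Q$ is itself a corank~$1$ cone whose vertex is disjoint from $\Gamma$ and from the flopping locus of the link; this is exactly the geometry of the pair $(Q_1,\Gamma)$.

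First I would record the local geometry of $Q_1$. The quadratic form $y_0y_6 - y_1y_5$ does not involve $y_3$, so $Q_1$ has rank~$4$, i.e.\ it is a cone over a smooth quadric surface with a single node at its vertex $P_3 = (0:0:1:0:0)$. By the discussion preceding the proposition together with Remark~\ref{remark-description-F}, the point $P_3$ lies away from the surface $F$, hence away from the curve $\Gamma$ and away from the two flopping lines $\ell_{0,1}$ and $\ell_{6,5}$ of $\chi^{-1}$.

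Next I would run the inverse construction of Theorem~\ref{theorem-diagram-inverse} on $(Q_1,\Gamma)$ and track the node. Since $P_3$ avoids both the blown-up curve $\Gamma$ and the flopped lines, the link is a local isomorphism in a neighbourhood of $P_3$: neither the blow-up $\sigma_Q$ nor the flop $\chi$ touches it, so the node is transported faithfully and descends to a single ordinary double point on $X^\m(1)$, with no new singularities created. Moreover, the $\GG_\m$-action preserving $(Q_1,\Gamma)$ transfers, by the equivariant form of Remark~\ref{remark:functoriality}, to a faithful $\GG_\m$-action on $X^\m(1)$. This yields a nodal prime Fano threefold of genus~$12$ whose associated link goes to the corank~$1$ cone $Q_1$ with vertex disjoint from $\Gamma$ and from the flopping lines, matching the description of type~(IV) in \cite[\S 7]{Prokhorov-nodal}, hence the class in \cite[Theorem~1.2]{Prokhorov-nodal}.

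The main obstacle is to justify that the construction of Theorem~\ref{theorem-diagram-inverse} survives when the quadric is singular: the base-point-freeness and non-contraction arguments (as in Lemma~\ref{lemma-Z} and the proof of Theorem~\ref{theorem-diagram-inverse}) were stated for a \emph{smooth} quadric, and one must check that they persist, with a local analysis near $P_3$ guaranteeing that the single node of $Q_1$ produces exactly one ordinary double point on $X^\m(1)$ and nothing worse. Once this local transport of the node is established, the remaining identification with type~(IV) reduces to a bookkeeping comparison of invariants --- the genus, the single node, the faithful $\GG_\m$-action, and the corank~$1$ quadric in the link --- against the list in \cite[\S 7]{Prokhorov-nodal}.
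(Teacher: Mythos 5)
There is a genuine gap. Your argument never actually identifies the type: you assume that the classification in \cite[Theorem~1.2]{Prokhorov-nodal} is organized by the geometry of the Sarkisov link to a quadric, and that type~(IV) is ``precisely the case'' where that quadric is a corank-one cone with vertex off $\Gamma$ and off the flopping locus. That is not how the types in \cite{Prokhorov-nodal} are distinguished, and nothing in your write-up substitutes for it. The first two paragraphs of your proposal only re-derive facts already recorded in the paper before the proposition (that $Q_1$ is a nodal cone, that $P_3$ avoids $F$, $\Gamma$ and the flopping lines, and that the node is therefore transported untouched to a single ordinary double point of $X^\m(1)$); the final ``bookkeeping comparison of invariants'' --- genus, one node, a $\GG_\m$-action, a corank-one quadric --- does not single out case~(IV), because those are not the invariants by which Theorem~1.2 of \cite{Prokhorov-nodal} separates its cases.

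The missing idea, which is the entire content of the paper's proof, is the construction of a distinguishing structure on $X^\m(1)$: the cone $Q_1$ contains the pencil of planes $\Pi_\lambda = \{y_5=\lambda y_0,\ y_6=\lambda y_1\}$ through the vertex $P_3$; a general $\Pi_\lambda$ meets $\Gamma$ in $5$ points and avoids the flopping locus, so its strict transform maps under $\sigma_X\circ\chi^{-1}$ to a quintic surface (the conic $\Pi_\lambda\cap Q_0$ being contracted), producing a pencil of quintic del Pezzo surfaces on $X^\m(1)$ with base point $P_3$; blowing up one member gives a small resolution carrying a base-point-free pencil of quintic del Pezzo surfaces, and it is \emph{this} property that \cite[Theorem~1.2]{Prokhorov-nodal} shows characterizes type~(IV). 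Your concern about whether the inverse construction survives the singularity of $Q_1$ is legitimate but secondary (the paper disposes of it by noting $P_3$ lies away from everything the link touches); the real failure is that without the del Pezzo pencil you have no handle on which case of the classification you have landed in.
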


\begin{proof}
A general plane $\Pi_\lambda$ in the pencil 
\begin{equation*}
\{y_5=\lambda y_0,\, y_6=\lambda y_1\}
\end{equation*}
is contained in $Q_1$,
meets $\Gamma$ at $5$ distinct points, passes through $P_3$, and does not meet the flopping locus of~$\chi^{-1}$
(see Remark~\ref{remark-description-F}).
Let $\Pi_\lambda'\subset Q'_1$
be the strict transform of $\Pi_\lambda$ (it is isomorphic to the blowup of $\Pi_\lambda$ at 5 points).
The composition~$\sigma_X\circ \chi^{-1}$ is regular near~$\Pi_\lambda'$ and
it is easy to see that \mbox{$\deg (\sigma_X(\chi^{-1}(\Pi_\lambda'))) = 5$}
(the map $\sigma_X\circ \chi^{-1}$ contracts the strict transform of the conic~$\Pi_\lambda \cap Q_0$).
Therefore, \mbox{$\sigma_X(\chi^{-1}(\Pi_\lambda'))$} generates a pencil of quintic del Pezzo surfaces 
on~$X^\m(1)$ with a base point $P_3$ which is singular on~$X^\m(1)$. 
Blowing up the Weil divisor~$\sigma_X(\chi^{-1}(\Pi_\lambda'))$ (this blowup is a small resolution of $X^\m(1)$)
we obtain a base point free pencil of quintic del Pezzo surfaces. 
By~\cite[Theorem~1.2]{Prokhorov-nodal} this property characterizes the case~(IV).
\hfill $\Box$
\end{proof}

It would be interesting to understand degenerations corresponding to the other two boundary points 
of the family $\PP^1 \setminus \{0,1,\infty\}$ of $\GG_\m$-invariant threefolds $X$.

\section*{Acknowledgements}

The authors were partially supported by the Russian Academic Excellence Project ``5-100'', 
by RFBR grant 15-01-02164, and Program of the Presidium of the Russian Academy of Sciences No.~01
``Fundamental Mathematics and its Applications'' under grant PRAS-18-01.
A.K was also supported by the Simons foundation.
Y.P. was also supported by RFBR grant 15-01-02158.

The authors are grateful to Costya Shramov for useful discussions, 
to Izzet Coskun for explaining the proof of Lemma~\ref{lemma-Z},
and to the referee for comments.

\providecommand{\bysame}{\leavevmode\hbox to3em{\hrulefill}\thinspace}
%
%

\bibliographystyle{amsalpha}
\bibliographymark{References}
\def\cprime{$'$}

\end{document}